\relax
%File: formatting-instruction.tex
\documentclass[letterpaper,twocolumn]{article} % DO NOT CHANGE THIS
\usepackage{times}  % DO NOT CHANGE THIS
\usepackage{helvet} % DO NOT CHANGE THIS
\usepackage{courier}  % DO NOT CHANGE THIS
\usepackage[hyphens]{url}  % DO NOT CHANGE THIS
\usepackage{graphicx} % DO NOT CHANGE THIS
\urlstyle{rm} % DO NOT CHANGE THIS
  % DO NOT CHANGE THIS
\usepackage{graphicx}  % DO NOT CHANGE THIS
\frenchspacing  % DO NOT CHANGE THIS
\setlength{\pdfpagewidth}{8.5in}  % DO NOT CHANGE THIS
\setlength{\pdfpageheight}{11in}  % DO NOT CHANGE THIS

\usepackage{tikz}
\usepackage{amsmath}
\usepackage{amssymb}
\usepackage{amsthm}
\newtheorem{definition}{Definition}
\newtheorem{proposition}{Proposition}
\newtheorem{lemma}{Lemma}
\newtheorem{corollary}{Corollary}
\newtheorem{theorem}{Theorem}

\date{}

%PDF Info Is REQUIRED.
% For /Author, add all authors within the parentheses, separated by commas. No accents or commands.
% For /Title, add Title in Mixed Case. No accents or commands. Retain the parentheses.
  \pdfinfo{
/Title (Approximate Gradient Descent Convergence Dynamics for Adaptive Control on Heterogeneous Networks)
/Author (Jean Carpentier and Sebastien Blandin)
}

\setcounter{secnumdepth}{0} %May be changed to 1 or 2 if section numbers are desired.

% The file aaai19.sty is the style file for AAAI Press 
% proceedings, working notes, and technical reports.
%
%\setlength\titlebox{2.5in} % If your paper contains an overfull \vbox too high warning at the beginning of the document, use this
% command to correct it. You may not alter the value below 2.5 in
\title{Approximate Gradient Descent Convergence Dynamics \\ for Adaptive Control on Heterogeneous Networks}
%Your title must be in mixed case, not sentence case. 
% That means all verbs (including short verbs like be, is, using,and go), 
% nouns, adverbs, adjectives should be capitalized, including both words in hyphenated terms, while
% articles, conjunctions, and prepositions are lower case unless they
% directly follow a colon or long dash
\author{Jean Carpentier\textsuperscript{\rm 1} and
Sebastien Blandin\textsuperscript{\rm 2}\\ % All authors must be in the same font size and format. Use \Large and \textbf to achieve this result when breaking a line
\textsuperscript{\rm 1}Ecole Polytechnique, Palaiseau, France\\
\textsuperscript{\rm 2}IBM Research, Singapore\\
jean.carpentier@polytechnique.edu, sblandin@sg.ibm.com % email address must be in roman text type, not monospace or sans serif
}
\begin{document}
\maketitle
\begin{abstract}
Adaptive control is a classical control method for complex cyber-physical systems, including transportation networks. In this work, we analyze the convergence properties of such methods on exemplar graphs, both theoretically and numerically. We first illustrate a limitation of the standard backpressure algorithm for scheduling optimization, and prove that a re-scaling of the model state can lead to an improvement in the overall system optimality by a factor of at most $\mathcal{O}(k)$ depending on the network parameters, where $k$ characterizes the network heterogeneity. We exhaustively describe the associated transient and steady-state regimes, and derive convergence properties within this generalized class of backpressure algorithms. Extensive simulations are conducted on both a synthetic network and on a more realistic large-scale network modeled on the Manhattan grid on which theoretical results are verified.
\end{abstract}
\begin{section}{Introduction}
We consider the scheduling problem on queuing networks, and specifically on urban road networks. Concretely, the problem consists of the allocation of time slots to traffic lights at intersections. While the routing policy may impact the stability of the scheduling solution~\cite{Boyer2015}, the routing problem is often considered decoupled~\cite{nikolova2006optimal} and is not addressed here.

The computational complexity of the scheduling problem on large-scale road networks has motivated the search for efficient decentralized algorithms only requiring local knowledge of network properties and efficient in the absence of coordination. Such decentralized approaches have proven quite efficient in practice~\cite{smith2013smart,xie2012schedule}, have connections with fluid dynamic models~\cite{Blandin2015}, and are amenable to agent-based learning methods such as reinforcement learning~\cite{richter2007natural}.

In the context of communication networks, the \textit{backpressure} algorithm~\cite{tassiulas1992stability} provides a throughput-maximizing control policy, i.e. a policy that guarantees that given any feasible flow, the maximal network queue size is asymptotically bounded. Furthermore, the backpressure policy requires only evaluation of queue size on neighboring road links. 

Several properties of the backpressure algorithm make it appealing for adaptive control of dynamical road networks. First, the backpressure solution is a policy, which by definition is able to handle variability of the network state. Second it can be implemented and deployed in a fully decentralized manner since it requires only local information. Lastly, it comes with theoretical guarantees on the queue size, and has been shown to perform very well in practice.

In the context of intelligent transport systems, significant research efforts have been dedicated to extensions of the backpressure algorithm in recent years, in particular to address the specifics of traffic light scheduling on road networks~\cite{varaiya2009universal}. The case of unknown routing rates was also investigated~\cite{gregoire2014back}, as well as the case of queues with finite capacity~\cite{gregoire2015capacity}, see also~\cite{hucoping} and~\cite{Hu2017a}.

In-depth theoretical analysis of the backpressure properties, such as its behavior under heavy load conditions~\cite{Bramson99}, or its link with game theory~\cite{tenbusch2014guaranteeing} have also been investigated~\cite{singh2015maxweight,moharir2013maxweight}. Similar results exist for the closely related max weight algorithm~\cite{Stolyar04} and an adapted backpressure algorithm~\cite{Venkataramanan10} has been devised in this context. The backpressure algorithm was also shown to be a greedy gradient descent over the quadratic potential~\cite{Wunder12}. In this context, acceleration methods have been proposed~\cite{Zargham13}. We refer to~\cite{Moeller10} for application to wireless sensor networks.

One of the most explicit drawbacks of the backpressure algorithm occurs at a network scale: in steady state, for certain model networks, queue sizes strictly decrease from the origin to the destination along every possible path~\cite{Stolyar09},~\cite{Ying11}, meaning that commuters traveling over longer paths incur longer queues. In this body of work, adjustments have been proposed via the consideration of an additional design cost explicitly accounting for path lengths, hence attempting to compensate that drawback. Other limitations have been investigated for specific network configurations~\cite{Stolyar11}.

In this work, we propose to analyze the convergence dynamics of the backpressure algorithm. Specifically, we are interested in a fine-grained analysis of the backpressure algorithm in different regimes, and associated convergence properties. We first explicitly re-cast the backpressure algorithm as a more general approximate gradient descent method, and show that in that class of methods, significant performance gaps exist depending on the choice of parameters. These general conclusions are derived on the case of a fundamental building block for network flow analysis, namely the $2\times1$ network including two upstream links connected to one downstream link.

We then conduct an in-depth theoretical analysis of the $2\times1$ network, and verify these results in simulation. We confirm experimentally that the results from the theoretical analysis obtained on a simple network apply to realistic networks such as the Manhattan grid.

The main contributions of this work include:
\begin{itemize}
\item illustration of arbitrarily large performance gaps in the backpressure class of adaptive control algorithms,
\item theoretical identification and characterization of transient and stationary regimes of the $2 \times 1$ network under backpressure algorithms,
\item numerical validation of theoretical properties and illustration on realistic networks such as the Manhattan grid.
\end{itemize}
The rest of this article is organized as follows. We first introduce notations and formulate the problem considered. We then present our main results on the convergence dynamics on the $2\times1$ network. We subsequently analyze the relative convergence of two instances of approximate gradient descent algorithms. Finally, we present detailed numerical results of the algorithm performance, and conclude.
\end{section}
\begin{section}{Preliminaries}\label{sec:Prelims}
\begin{subsection}{Network model}
We consider a discrete-time network of queues with $q_{l,m}(t) \in \mathbb{R^{+}}$ denoting the (continuous) number of vehicles queuing at location $l$ at time $t$ with the intention of traveling to the downstream link $m$ next. In the transportation context, each $q_{l,m}(\cdot)$ represents a distinct queue of vehicles waiting to cross an intersection with segregated movements (e.g. turn left, go straight, turn right). We also note $q$ the vector of $q_{l,m}$ and omit the time dependency for compactness. Queuing networks can also model public transport and multi-modal networks~\cite{horni2016multi}, although the emergence of mobile data often requires hybrid approaches~\cite{faster2019}.

The outflow $s_{l,m}(t)$ of queue $q_{l,m}(t)$ at time $t$ is the maximum number of vehicles able to cross the intersection within a time slot, defined as the minimum of the queue size and the queue capacity, assumed static:
\begin{equation}
s_{l,m}(t) = \min(q_{l,m}(t),c_{l,m}).
\label{eq:throughput}
\end{equation}
\noindent When this minimum $s_{l,m}(t)$ is reached at the queue size $q_{l,m}(t)$, the intersection is in \textit{unsaturated regime}, and when the minimum is reached at the queue capacity $c_{l,m} \in \mathbb{R^{+}_{*}}$, the intersection is in \textit{saturated regime}. 

Given initial conditions $q_{l,m}(t=0)$ for the queues, and prescribed source and sink flows $e_{l,m}(t)$ specifying the number of vehicles entering and leaving the network, the conservation of vehicles reads:
\begin{align}
q_{l,m}(t+1) = & \, q_{l,m}(t) + r_{l,m}(t)\sum\limits_k u_{k,l}(t) s_{k,l}(t) \nonumber \\
& - u_{l,m}(t)\,s_{l,m}(t) + e_{l,m}(t)
\label{eq:Conserv}
\end{align}
\noindent where $r_{l,m}(t) \in [0,1]$ is the proportion of vehicles reaching node $l$ intending to visit node $m$ next, and such that $\sum_{m}r_{l,m}(t) = 1$, and $u_{l,m}(t) \in \{0,1\}$ is a control variable specifying whether queue $q_{l,m}(t)$ is activated, i.e. has green light, at time $t$. For each intersection, the activation set follows standard constraints encoding compatible movements, (e.g. in the case of left-hand driving, turn right movements can be activated simultaneously, but not go straight and turn left movements).

Given an objective function $V(q)$ satisfying Lyapunov properties, the \textit{scheduling problem} is concerned with the design of an activation policy $u_{l,m}(\cdot)$ with good properties with respect to $V(\cdot)$.
\end{subsection}
%__________________________________________________________________________
\begin{subsection}{Backpressure algorithm}
In this section we recall some existing results. The \textit{backpressure algorithm}~\cite{tassiulas1992stability} provides \textit{maximal throughput stability} in the sense that if the inputs flows are feasible in expectation, then the queue sizes are asymptotically bounded.
\begin{definition}
The \textit{backpressure policy} is the solution $u$ to the maximization problem :
\begin{equation}
\max \limits_{u} \sum\limits_{l,m}{\left( q_{l,m} - \sum\limits_k{q_{m,k} r_{m,k}}  \right) c_{l,m} u_{l,m}}.
\label{eq:backpressure}
\end{equation}
\label{def:Backpressure}
\end{definition}
It can be shown, e.g. see~\cite{Wongpiromsarn2014}, that this objective function leads to activating at each decision point the traffic movement maximizing the difference between its upstream queue and its downstream queue, hence the term ``backpressure''. 

We first show that the back pressure policy, although arising from a local greedy formulation, corresponds to an approximate gradient descent step. 
\begin{proposition}
The backpressure algorithm~\eqref{eq:backpressure} is an approximate gradient descent step update on the objective function $V(q) = \frac{1}{2}\sum\limits_{l,m} q^2_{l,m} = \frac{1}{2}q^{T}q$.
\end{proposition}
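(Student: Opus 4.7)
The plan is to compute the one-step change in the Lyapunov function $V(q)=\tfrac{1}{2}q^{T}q$ along the dynamics~\eqref{eq:Conserv}, truncate at first order, and match the resulting expression term-by-term with the backpressure objective~\eqref{eq:backpressure}.

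First I would write
\begin{align*}
V(q(t+1)) - V(q(t)) &= \sum_{l,m} q_{l,m}(t)\,\Delta q_{l,m}(t) \\
&\quad + \tfrac{1}{2}\sum_{l,m} \Delta q_{l,m}(t)^{2},
\end{align*}
where $\Delta q_{l,m}(t) = r_{l,m}\sum_k u_{k,l} s_{k,l} - u_{l,m}s_{l,m} + e_{l,m}$ is read off from~\eqref{eq:Conserv}. Dropping the quadratic remainder is the first source of approximation: it is the standard linearization used to identify a greedy descent direction, and it would be justified by noting that $\Delta q_{l,m}$ is bounded independently of $q_{l,m}$ while $\nabla V$ grows with $q$, so the linear term dominates in the relevant regime.

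Next I would substitute $\Delta q_{l,m}$ and isolate the $u$-dependent part. The source/sink contribution $\sum_{l,m} q_{l,m} e_{l,m}$ is a constant with respect to $u$ and can be discarded. It then remains to re-index the cross term: by renaming the upstream/downstream indices, one has
\[
\sum_{l,m} q_{l,m} r_{l,m} \sum_{k} u_{k,l} s_{k,l}
= \sum_{l,m} u_{l,m} s_{l,m} \sum_{k} q_{m,k} r_{m,k},
\]
so that the linearized change in $V$ takes the form
\[
-\sum_{l,m}\Bigl(q_{l,m}-\sum_{k}q_{m,k}r_{m,k}\Bigr) s_{l,m}\, u_{l,m}
\]
up to a constant. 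Minimizing this expression over $u$ is equivalent to maximizing its opposite, which is precisely the backpressure objective of Definition~\ref{def:Backpressure}, except that the coefficient $s_{l,m}$ appears in place of $c_{l,m}$.

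The final step is to identify the swap $s_{l,m}\leftrightarrow c_{l,m}$ as the second source of approximation. By~\eqref{eq:throughput} these two quantities coincide in the saturated regime and differ by at most the slack $c_{l,m}-q_{l,m}$ in the unsaturated regime; since this slack does not depend on the control $u$ and vanishes whenever the queue actually participates in the scheduling bottleneck, replacing $s_{l,m}$ with $c_{l,m}$ yields a decision rule that agrees with exact greedy gradient descent on $V$ up to an $O(1)$ per-link perturbation. Combining the linearization and this substitution shows that~\eqref{eq:backpressure} is an approximate one-step gradient descent update on $V(q)=\tfrac{1}{2}q^{T}q$, as claimed. The main subtlety I anticipate is making the second approximation precise rather than purely cosmetic; the cleanest way is to present both $s_{l,m}$ and $c_{l,m}$ versions as belonging to the same class of ``approximate gradient descent'' scheduling rules, which also sets up the generalization exploited later in the paper.
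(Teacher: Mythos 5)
Your proposal is correct and follows essentially the same route as the paper: an exact expansion of $V(q(t+1))-V(q(t))$ into a linear term plus a quadratic remainder, substitution of the conservation dynamics with re-indexing of the cross term, discarding the $u$-independent source term, and the two approximations (dropping the quadratic remainder in the saturated/large-queue regime, and replacing $s_{l,m}$ by $c_{l,m}$). The only difference is that you spell out the re-indexing identity and the justification of the $s_{l,m}\leftrightarrow c_{l,m}$ swap slightly more explicitly than the paper does, which is a welcome clarification rather than a change of method.
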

\begin{proof}
If we note $\delta(t+1)=q(t+1)-q(t)$, the one-step temporal difference in the objective function reads:
\begin{equation}
	V(q(t+1)) - V(q(t)) = \delta(t+1)^T q(t) +\frac{1}{2}\delta(t+1)^T\delta(t+1).
\label{eq:diff}
\end{equation}
Expanding $\delta(t+1)$ using the conservation equation~\eqref{eq:Conserv} we can re-write $\delta(t+1)^T q(t)$ as: 
\begin{multline*}
	\delta(t+1)^T q(t) = \\ E^T q - \sum\limits_{l,m} \left( q_{l,m} - \sum\limits_k q_{m,k} r_{m,k}  \right) u_{l,m} s_{l,m}
\end{multline*}
where E is the vector of $e_{l,m}$, and the time-dependence is omitted on the right-hand side. In the saturated regime, the first term on the right-hand side of equation~\eqref{eq:diff} dominates, and a steepest gradient descent step on the approximate temporal difference $\delta(t+1)^T q(t)$ reads:
\begin{multline}
\arg\min_u \delta(t+1)^T q(t) = \\ \arg\max_u \sum\limits_{l,m} \left( q_{l,m} - \sum\limits_k q_{m,k} r_{m,k}  \right) u_{l,m} s_{l,m}.
\label{eq:greedyStep}
\end{multline}
\noindent Approximating the throughput~\eqref{eq:throughput} as $s_{l,m} \approx c_{l,m}$ leads to the definition of the backpressure~\eqref{eq:backpressure}, which corresponds to making the approximation that the queues are in the saturated regime.
\end{proof}
\noindent Motivated by expression~\eqref{eq:greedyStep}, in the following we define the \textit{priority} of a queue as:
\begin{equation}
	p_{l,m} = \left( q_{l,m} - \sum\limits_k{q_{m,k} r_{m,k}}  \right) c_{l,m}.
\label{eq:Priority}
\end{equation}
\noindent This view of backpressure as a general one step update for an approximate gradient descent in the context of adaptive control motivates us to consider a generalization of the objective function via re-scaling. Specifically, given $\gamma_{l,m} > 0$, we consider a generalized objective function $V(q) = \frac{1}{2}\sum\limits_{l,m} \gamma_{l,m} q^2_{l,m}$ associated with the generalized priorities:
\begin{equation}
	p_{l,m} = \left( \gamma_{l,m} q_{l,m} - \sum\limits_k{\gamma_{m,k} q_{m,k} r_{m,k}}  \right) c_{l,m}.
	\label{eq:PwithGamma}
\end{equation}
\noindent We now illustrate that this re-scaling can impact the performance of the approximate gradient descent method by an arbitrary factor depending on the network heterogeneity. We focus the analysis on the comparison between two values of $\gamma_{l,m}$, the case of $\gamma_{l,m} = 1$ which corresponds to the classical backpressure, and the case of $\gamma_{l,m} = 1/c_{l,m}$ which corresponds to a variant of the backpressure algorithm where time spent in the queue is the quantity to be optimized (since $q/c$ is the steady-state saturated regime approximation of time spent in the queue).
\end{subsection}
%________________________________________________________________________________
\begin{subsection}{Heterogeneous Flows}\label{sec:limitations}
We now define a simple but fundamental example and show the limitations of the backpressure algorithm on that case. Consider a simple network with $2$ upstream queues $q_{1,3}$, $q_{2,3}$ and $1$ downstream queue $q_{3,4}$, where heterogeneity between the upstream queues is parameterized by a factor $k$. This topology corresponds to the classical \textit{merge} junction in traffic engineering~\cite{daganzo1995ctm}, see~\cite{piccoli} for the underlying mathematical theory of network fluid-dynamics model.

Given a reference capacity $c$, link capacities are defined as $c_{2,3} = k \, c_{1,3} = kc$. Link inflows are defined as $f_{2,3} = k \, f_{1,3} = k\eta c$. Without loss of generality, we also assume that the downstream queue is constant since the main object of this study is the competing dynamics of upstream queues given an arbitrary downstream queues. For stability, we also make the classical assumption that the inflow is lower than the uniform capacity, i.e. $\eta_i = \frac{f_i}{c_i} \leq 0.5$. 
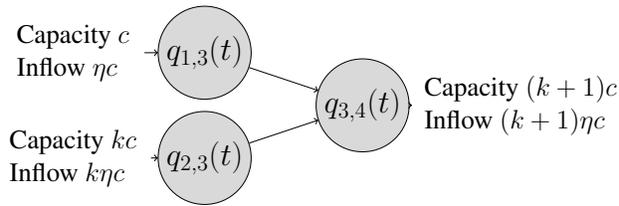
\begin{figure}[htb!]
\centering
    \scalebox{0.7}{\begin{tikzpicture}[
      mycircle/.style={
         circle,
         draw=black,
         fill=gray,
         fill opacity = 0.3,
         text opacity=1,
         inner sep=2pt,
         minimum size=20pt,
         font=\LARGE},
     myhiddencircle/.style={
         circle,
         draw=white,
         align=left,
         font=\Large}]

	\node[myhiddencircle] (hidden_upstream_queue_one) at (-2.5,1) {Capacity $c$\\ Inflow $\eta c$};
	\node[myhiddencircle] (hidden_upstream_queue_two) at (-2.5,-1) {Capacity $kc$\\ Inflow $k \eta c$};
	\node[myhiddencircle] (hidden_downstream_queue) at (6,0) {Capacity $(k+1) c$\\ Inflow $(k+1) \eta c$};
    \node[mycircle] (first_upstream_queue) at (0,1) {$q_{1,3}(t)$};
    \node[mycircle] (second_upstream_queue) at (0,-1) {$q_{2,3}(t)$};
	\node[mycircle] (downstream_queue) at (3,0)        {$q_{3,4}(t)$};

    \draw[->] (first_upstream_queue) edge (downstream_queue) %
              (second_upstream_queue) edge (downstream_queue)
              (hidden_upstream_queue_one) edge (first_upstream_queue)
            (hidden_upstream_queue_two) edge (second_upstream_queue)
            (downstream_queue) edge (hidden_downstream_queue);
\end{tikzpicture}}
\caption{\textbf{$2\times1$ parametric junction} with two upstream queues and one downstream queue.}
\label{fig:ParametricJunction}
\end{figure}

\noindent We now prove that the classical backpressure algorithm~\eqref{eq:backpressure} can perform arbitrary poorly for heterogeneous networks. We present the results for the case $q_{3,4}(t) =0$.
\begin{proposition}
Consider the $2 \times 1$ network from Figure~\ref{fig:ParametricJunction}, with $\eta = 0.5$ and $q_{3,4}(t) = 0$. A basic scheduling alternating activation of each upstream queue leads to: 
\begin{equation}
q_{1,3}(t) \approx \eta c \quad \text{and} \quad q_{2,3}(t) \approx k \eta c
\label{eq:naive}
\end{equation}
while the backpressure activation rule~\eqref{eq:backpressure} yields:
\begin{equation}
q_{1,3}(t) \geq k^2 \eta c \quad \text{and} \quad q_{2,3}(t) \geq k \eta c
\end{equation}
\end{proposition}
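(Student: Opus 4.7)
The plan is to handle the two scheduling rules separately and exploit the fact that $q_{3,4}(t)=0$ decouples each upstream queue from any downstream feedback, so that the priorities~\eqref{eq:Priority} reduce to $p_{1,3}=q_{1,3}\,c$ and $p_{2,3}=q_{2,3}\,kc$, and each $q_{i,3}$ obeys a simple scalar recurrence determined solely by whether it is activated at a given step.

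For the alternating rule, I would treat the two upstream queues independently and analyze the two-step map for each. With $\eta = 0.5$, during one activation/non-activation cycle queue $q_{i,3}$ receives exactly $2\eta c_{i,3}=c_{i,3}$ of inflow and (once saturated) serves exactly $c_{i,3}$ when activated, so the cycle is marginally stable. Direct iteration of the map $q\mapsto q-\min(q,c_{i,3})+\eta c_{i,3}$ followed by $q\mapsto q+\eta c_{i,3}$ shows that the stable cycle oscillates between $\eta c_{i,3}$ right after service and $c_{i,3}$ right before service. Instantiating this for $c_{1,3}=c$ and $c_{2,3}=kc$ gives exactly the stated approximations $q_{1,3}\approx \eta c$ and $q_{2,3}\approx k\eta c$ (modulo the small-amplitude oscillation, which only adds an $\mathcal{O}(c)$ constant).

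For backpressure, I would split the dynamics into a transient phase and a steady-state phase. In the transient, starting from $q_{1,3}(0)=q_{2,3}(0)=0$, I would argue by induction that as long as $q_{1,3}<k\,q_{2,3}$, the priority ordering $p_{2,3}>p_{1,3}$ forces queue 2 to be served every step. While queue 2 is repeatedly activated, its recurrence $q\mapsto q-\min(q,kc)+k\eta c$ admits a unique fixed point $q_{2,3}=k\eta c$ in the unsaturated regime, reached essentially immediately. In the meantime queue 1 is never served, so it grows linearly by $\eta c$ per step. This transient phase terminates precisely when $q_{1,3}\,c \ge q_{2,3}\,kc$, i.e.\ when $q_{1,3} \ge k\,q_{2,3} = k\cdot k\eta c = k^{2}\eta c$, which is the claimed lower bound.

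Once backpressure begins activating queue 1, I would check that the resulting alternating cycle preserves both bounds: activating queue 1 from the configuration $(q_{1,3},q_{2,3})=(k^{2}\eta c,k\eta c)$ decreases $q_{1,3}$ by $(1-\eta)c$ and increases $q_{2,3}$ by $k\eta c$, after which the priorities overwhelmingly favor queue 2 again, the system returns to $q_{2,3}=k\eta c$, and $q_{1,3}$ is restored to $k^{2}\eta c$ up to an $\mathcal{O}(c)$ term, so both inequalities hold at the cycle minima thereafter. The delicate step I anticipate is twofold: controlling tie-breaking when $p_{1,3}=p_{2,3}$ (which occurs exactly at the transition), and rigorously verifying that queue 2 does not leave the unsaturated regime during the transient so that its fixed point is indeed $k\eta c$ rather than something larger; both can be handled by a careful case analysis on whether $q_{2,3}$ lies below or above $kc$ at each step.
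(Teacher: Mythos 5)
Your proposal is correct, and its decisive inequality is the same as the paper's: queue $2$ is pinned near $q_{2,3}=k\eta c$, so $p_{2,3}=kc\,q_{2,3}\geq k^{2}\eta c^{2}$, and queue $1$ can only win the comparison $c\,q_{1,3}>p_{2,3}$ once $q_{1,3}>k^{2}\eta c$. Where you differ is in how you establish the pin and what you do with it. The paper gets $q_{2,3}\geq f_{2,3}=k\eta c$ from the one-line, policy- and initial-condition-independent invariant that every vehicle spends at least one step in its queue, and then stops: the starvation threshold for queue $1$ follows immediately, with no need to track the trajectory. You instead run a full dynamical analysis from zero initial data: a transient in which queue $2$ is served every step and relaxes to its fixed point $k\eta c$ while queue $1$ grows linearly to $k^{2}\eta c$, followed by verification that the resulting period-two cycle restores $(k^{2}\eta c,\,k\eta c)$ exactly (your arithmetic checks out: with $\eta=0.5$ the post-service dip of $q_{1,3}$ by $(1-\eta)c$ is exactly refilled over the next step, and $q_{2,3}$ returns from $kc$ to $k\eta c$). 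Your route buys an explicit description of the limit cycle and of the transient duration, at the cost of depending on the initial condition and of the tie-breaking and saturation case analysis you flag; the paper's route buys brevity and uniformity in $t$. Both arguments share the same residual imprecision, acknowledged in each: $q_{1,3}$ briefly dips $\mathcal{O}(c)$ below $k^{2}\eta c$ right after being served, so the stated inequality holds only up to that correction, consistent with the $\approx$ in \eqref{eq:naive}.
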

\begin{proof}
First, given that in this discrete time setting each vehicle spends at least one time step in the queue, we have $q_{l,m} \geq f_{l,m}$. Here, since the demand is feasible, an alternating schedule would result in $q_{1,3}(t) \approx f_{1,3}$, and similarly for $q_{2,3}$, which proves~\eqref{eq:naive}. 

Second, on this example, the backpressure priorities from equation~\eqref{eq:Priority} read $p_{l,m}(t) = c_{l,m} q_{l,m}(t)$. Since $q_{l,m}(t) \geq f_{l,m}$, we have $p_{2,3}(t) = c_{2,3} q_{2,3}(t) \geq k^2 c^2 \eta$. The queue $q_{1,3}$ is activated only if $p_{1,3}(t) = c_{1,3} q_{1,3}(t) > p_{2,3}(t) \geq k^2 c^2 \eta$, or equivalently $q_{1,3}(t) > k^2 c \eta$. When $q_{1,3}$ goes below that value, queue $q_{2,3}$ is activated because it has higher priority.

Hence the minimal queue size over time under backpressure is $k$ times the queue size under the alternating schedule. A similar analysis can be conducted with non-zero $q_{3,4}$, the result is obtained for a sufficient upstream queue heterogeneity compared to the downstream queue value.
\end{proof}

In the rest of the article, we investigate the convergence dynamics of the approximate gradient descent dynamics under the generalized backpressure priorities~\eqref{eq:PwithGamma}.
\end{subsection}
\end{section}
\begin{section}{Stability Domain}\label{sec:Theory}
We first characterize the asymptotic convergence of the parametric $2\times 1$ network from Figure~\ref{fig:ParametricJunction}. To simplify the notations on this network, we index the upstream queues by $i \in \{1,2\}$.
%_________________________________________________________________
\begin{subsection}{Regime Types}
For $\eta < 0.5$, the demand is feasible, capacities exceed input flows, which means that at least one queue is activated more often than needed to process the input flow, which means $s_{u,i}(t) < c_i$. For the \emph{$2\times1$} junction, either only one upstream queue is in the unsaturated regime, and we call this network regime \textit{R1}, or both upstream queues are in the unsaturated regime, and we call this network regime $R2$.

Due to space limitation, we focus on the regime $R1$ which exhibits more complex behavior (indeed under the condition $\eta < 0.5$ regime $R2$ can be proven to be a transient regime evolving into $R1$ eventually) and serves as illustrative example of a limitation of the classical backpressure algorithm in the example of previous section. With a slight abuse of notations, we use the saturation state, \textit{s} for saturated and \textit{u} for unsaturated, to index the queue (e.g. $(u,s)=(1,2)$ when queue $1$ is unsaturated and queue $2$ is in the saturated state).
\end{subsection}
\begin{subsection}{A re-scaled backpressure algorithm}
\label{ssn:Monoundersaturation}
A discussed earlier, a queue is bounded below by a $q_{i,min}$ which is the value of its input flow:
\begin{equation}
	q_{i,min} = f_i \leq q_i(t).
	\label{eqn:MinQ}
\end{equation}
\noindent The expressions of the associated minimal priority $p_{i,min}$ can be derived by instantiating equation~\eqref{eq:PwithGamma} with the priority value~\eqref{eqn:MinQ}: 
\begin{equation*}
	p_{i,min} = p_i(q_i = q_{i,min}) \leq p_i(t)
	\label{eqn:MinP}
\end{equation*}
In order for a queue to be activated, its priority must be at least greater than the minimal priority of every competing queue. We call $p_{act}$ this a-priori minimal priority to be activated:
\begin{equation}
	p_{act} = \max\limits_j p_{j,min},
	\label{eqn:Pact}
\end{equation}
\noindent and the expression associated $q_{i,act}$ follows from~\eqref{eq:PwithGamma} as:
\begin{equation}\begin{split}
	q_{i,act} = q_i(p_i = p_{act}) = \frac{\gamma_0}{\gamma_i} Q + \frac{1}{\gamma_i c_i} p_{act}.
	\label{eqn:MinQforall}
\end{split} \end{equation}
In regime $R1$ one queue is saturated and the other queue is unsaturated. It follows from~\eqref{eqn:MinQforall} that:
\begin{equation}
	q_{s,act} = \frac{\gamma_0}{\gamma_s} Q + \frac{\gamma_j f_j - \gamma_0 Q}{\gamma_s c_s}  c_j \geq c_s 
	\label{eqn:CondMonoParams}
\end{equation}
\noindent where $j$ is such that $p_j = p_{act}$. If $j = s$, meaning that queue $j$ is in the saturated state, then equation~\eqref{eqn:CondMonoParams} simplifies to $\eta \geq 1 $, which is impossible by assumption. Hence the queue $j$ must be in the unsaturated state when reaching its activation priority, and the other queue is in the saturated state:
\begin{equation}
	p_{act} = p_{u, min} = p_u(q_u=f_u)
	\label{eqn:PactWithO}
\end{equation}
\noindent We can now express sufficient conditions for each of the queues to be in saturated or unsaturated state, depending on the exogenous network parameters.
\begin{proposition}In the $R1$ regime, the following states can exist:
	\begin{itemize}
		\item $(u,s) = (1,2)$ when the following conditions are satisfied:
			\begin{itemize}
				\item for $\gamma = \mathbf{1}_n$: $ Q \geq \frac{k^2 - \eta}{k-1}c $
				\item for $\gamma = [\frac{1}{c_i}]_{i \in \{1,...,n\}}$: $ Q \geq \frac{k - \eta}{k-1}(k+1)c$
			\end{itemize}
		\item $(u,s) = (2,1)$ when the following conditions are satisfied:
			\begin{itemize}
				\item for $\gamma = \mathbf{1}_n$: $ Q \leq \frac{k^2\eta - 1}{k-1}c $
				\item for $\gamma = [\frac{1}{c_i}]_{i \in \{1,...,n\}}$: $Q \leq \frac{k\eta - 1}{k-1}(k+1)c$
			\end{itemize}
	\end{itemize}
	\label{prop:BothMono}
\end{proposition}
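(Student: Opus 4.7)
The plan is to derive the state conditions by instantiating the general activation formula~\eqref{eqn:MinQforall}--\eqref{eqn:PactWithO} for each of the two possible sub-states of regime $R1$, and for each of the two re-scalings $\gamma=\mathbf{1}_n$ and $\gamma_i=1/c_i$. The pivotal identity already established is that in regime $R1$ the activation priority comes from the unsaturated queue, i.e.\ $p_{\mathrm{act}} = p_{u,\min} = (\gamma_u f_u - \gamma_0 Q)\,c_u$. Substituting this into~\eqref{eqn:CondMonoParams} gives the saturation criterion
\begin{equation*}
\frac{\gamma_0}{\gamma_s}\,Q + \frac{c_u}{\gamma_s c_s}\,(\gamma_u f_u - \gamma_0 Q) \;\geq\; c_s,
\end{equation*}
which is the single inequality from which each of the four bounds will follow.

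First I would handle the sub-state $(u,s)=(1,2)$, for which $c_u=c$, $c_s=kc$, $f_u=\eta c$. For $\gamma=\mathbf{1}_n$ the inequality becomes $Q + (\eta c - Q)/k \geq kc$, and multiplying by $k$ and collecting terms in $Q$ gives $(k-1)Q \geq (k^2-\eta)c$. For $\gamma_i=1/c_i$ (with $\gamma_0=1/((k+1)c)$, $\gamma_1=1/c$, $\gamma_2=1/(kc)$), the coefficients of $Q$ collapse to $\frac{k-1}{k+1}Q \geq (k-\eta)c$. Both rearrangements reproduce the stated bounds.

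The sub-state $(u,s)=(2,1)$ is symmetric but with $c_u=kc$, $c_s=c$, $f_u=k\eta c$. The same two substitutions now yield $k^2\eta c - (k-1)Q \geq c$ for $\gamma=\mathbf{1}_n$ and $k\eta c - \frac{k-1}{k+1}Q \geq c$ for $\gamma_i=1/c_i$; since $Q$ appears with a negative sign, these invert into the upper bounds $Q \leq \frac{k^2\eta-1}{k-1}c$ and $Q \leq \frac{k\eta-1}{k-1}(k+1)c$ respectively.

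The mathematical content is essentially contained in the derivation of~\eqref{eqn:PactWithO}, which was already carried out before the proposition; the remaining work is a bookkeeping exercise. The only step requiring minor care is the sign reversal between the two sub-states: when the saturated queue is the high-capacity one, the saturation threshold $c_s=kc$ forces $Q$ to be \emph{large}, whereas when the saturated queue is the low-capacity one, the fact that the heavier queue is driving the activation forces $Q$ to be \emph{small}. Making sure the correct direction of each inequality is preserved through the algebraic manipulation is the main thing to keep track of; everything else is direct substitution into a single template inequality.
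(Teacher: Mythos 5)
Your proposal is correct and follows exactly the paper's route: the paper's proof is precisely ``instantiate~\eqref{eqn:CondMonoParams} on the cases $(u,s)=(1,2)$ and $(u,s)=(2,1)$,'' and your substitutions (with $p_{act}=(\gamma_u f_u-\gamma_0 Q)c_u$ from~\eqref{eqn:PactWithO}) carry out that algebra correctly for all four bounds, including the sign reversal that turns the $(u,s)=(2,1)$ case into upper bounds on $Q$.
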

\begin{proof}
The bounds follow from instantiating equation~\eqref{eqn:CondMonoParams} on the cases $(u,s) = (1,2)$ and $(u,s) = (2,1)$.
\end{proof}
\noindent We represent the different phases characterized by Proposition~\ref{prop:BothMono} in Figure~\ref{fig:BothMono}.
\begin{figure}[htb!] 
		\centering
		\includegraphics[width=\columnwidth]{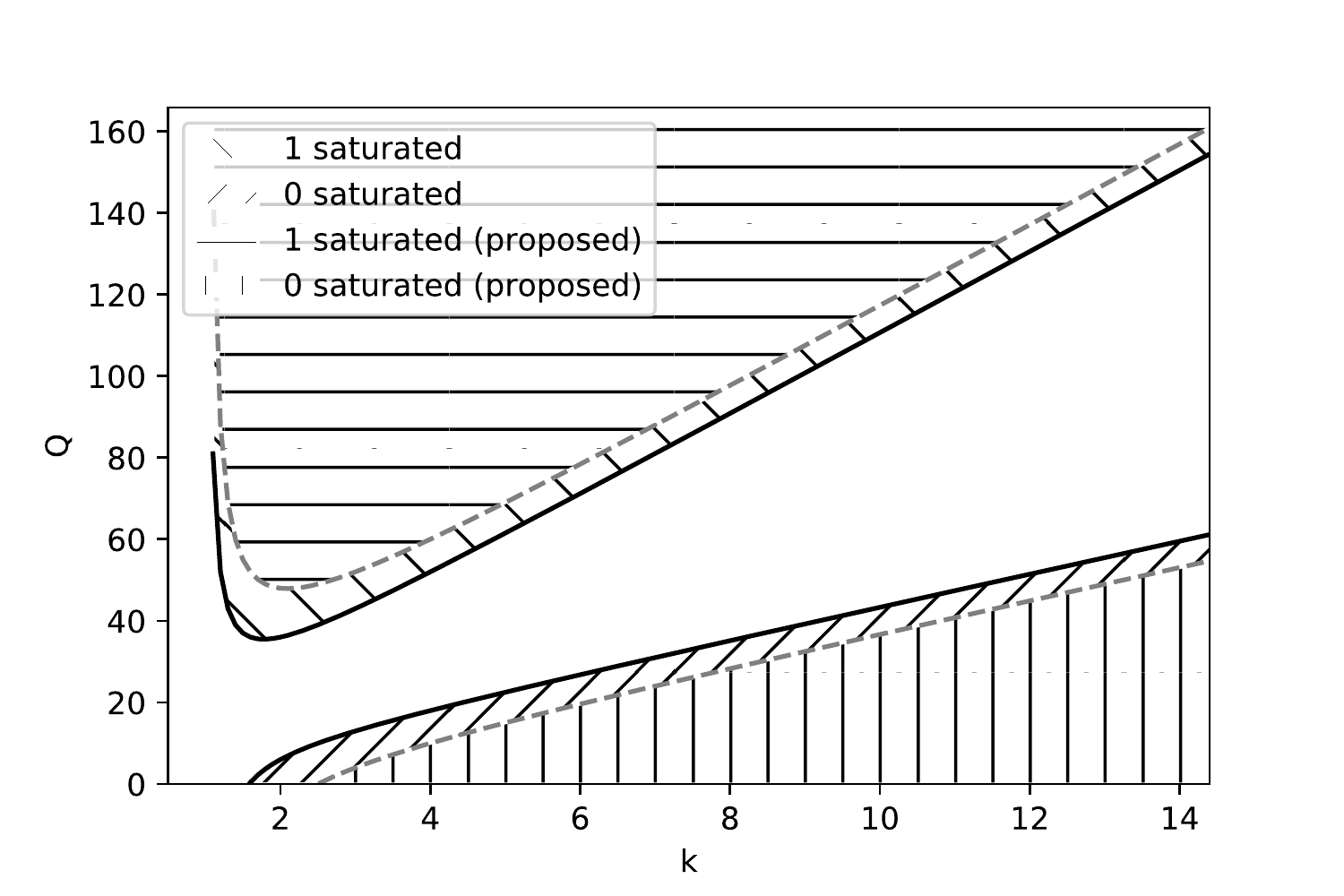}	
		\caption{\textbf{Saturated and unsaturated} states for each link in $R1$ regime. When $k$ is large compared to $Q$ (bottom right part of the chart), i.e. in case of significant heterogeneity, the smaller capacity queue (named $0$) is saturated, while for larger $Q$ values (top left part of the chart), i.e. for high network load and resulting high coupling between high-demand links, the larger capacity queue (named $1$) is saturated.}
		\label{fig:BothMono}
\end{figure}

We observe that with the proposed non-uniform weights $\gamma_{l,m}=1/c_{l,m}$, the saturation region decreases, which suggests a better utilization of the network capacity.
\end{subsection}
\end{section}
\begin{section}{Convergence properties}\label{sec:ConvProp}
In this section we characterize the transient and stationary phases of the system, and provide theoretical results on the impact of the weights $\gamma$ on the limit of the gradient descent.
%________________________________________________________
\begin{subsection}{Transient state}
In regime $R1$, under the assumption that $\eta < 0.5$, we first prove that in general, there exists a transient regime during which the maximal priority decreases. 
\begin{definition}
	The rolling min-max over 2 time slots reads:
	\begin{equation*}
		\tilde{p}_{max}(t) = \min\limits_{v \in \{t-1, t\}} \max\limits_{i \in \{1,2\}} p_i(v)
	\end{equation*}
\end{definition}
\noindent Lemma~\ref{lemma:PmaxDecreases} provides conditions to ensure the overall decrease in queue size during the transient phase.
\begin{lemma}
	For the $2 \times 1$ network, with $\eta < 0.5$, we have:
	\begin{equation}
		\forall t,	\tilde{p}_{max}(t+2) \leq \tilde{p}_{max}(t).
		\label{eqn:PmaxSoft}
	\end{equation}
	
	Furthermore, with $i^* = \arg\max p_i(t)$ and $j$ the other queue, and with $\Delta p(t) = p_{i^*}(t) - p_j(t) \geq 0$, we have $\forall t$:
	\begin{equation}\begin{array}{c}
		q_{i^*}(t) > 2f_{i^*} 
		\text{ and }\Big(q_j(t) > f_j \text{ or } \Delta p(t) > 0\Big)
		\\ \implies
		\tilde{p}_{max}(t+2) < \tilde{p}_{max}(t) 
		\label{eqn:PmaxStrict}
	\end{array}\end{equation}
	\label{lemma:PmaxDecreases}
\end{lemma}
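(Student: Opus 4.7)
The plan is to argue by a case analysis on the activation pattern of the two queues over the two-step window $[t,t+1]$, using the explicit form of the priority update: for each queue $i$, idling at step $s$ yields $p_i(s+1) = p_i(s) + \alpha_i$ with $\alpha_i := \gamma_i c_i f_i$; activation in the saturated sub-regime yields $p_i(s+1) = p_i(s) - D_i$ with $D_i := \gamma_i c_i (c_i - f_i)$; and activation in the unsaturated sub-regime yields $p_i(s+1) = p_{i,\min} := \gamma_i c_i f_i - \gamma_0 c_i Q$. The hypothesis $\eta < 0.5$ is equivalent to $D_i > \alpha_i$, so a single activation followed by a single idle step strictly reduces priority in the saturated sub-regime.

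The core step is the auxiliary bound $\min(M(t+1), M(t+2)) \leq M(t)$, where $M(s) := \max_i p_i(s)$. Let $i^* := \arg\max_i p_i(t)$ and $j$ denote the other queue. Two scenarios arise. First, if $i^*$ remains the maximiser at $t+1$, it is activated at both $t$ and $t+1$, its priority has strictly dropped, and $M(t+1) < M(t)$. Second, if the activations alternate---say $i^*$ at $t$ and $j$ at $t+1$---one tracks $p_j$ and $p_{i^*}$ separately over the window $[t, t+2]$. For $p_j$ (idle at $t$, activated at $t+1$) one checks $p_j(t+2) \leq p_j(t) \leq M(t)$ in both saturation sub-cases. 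For $p_{i^*}$ (activated at $t$, idle at $t+1$), the saturated sub-case gives $p_{i^*}(t+2) = p_{i^*}(t) - D_{i^*} + \alpha_{i^*} < p_{i^*}(t)$ using $D_{i^*} > \alpha_{i^*}$, while the unsaturated sub-case gives $p_{i^*}(t+2) = p_{i^*,\min} + \alpha_{i^*}$, which is at most $p_{i^*}(t)$ precisely when $q_{i^*}(t) \geq 2 f_{i^*}$. This yields $M(t+2) \leq M(t)$ in all but the degenerate unsaturated sub-case $q_{i^*}(t) < 2 f_{i^*}$, where the complementary half $M(t+1) \leq M(t)$ is instead recovered under the disjunction $q_j(t) > f_j$ or $\Delta p(t) > 0$.

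To upgrade the auxiliary bound to $\tilde{p}_{\max}(t+2) \leq \tilde{p}_{\max}(t)$, I would apply it at both $t$ and $t-1$, obtaining $\min(M(t+1), M(t+2)) \leq M(t)$ and $\min(M(t), M(t+1)) \leq M(t-1)$, and then verify by a short four-way case split on which elements of $\{M(t-1),M(t)\}$ and $\{M(t+1),M(t+2)\}$ attain the two minima that $\min(M(t+1), M(t+2)) \leq \min(M(t-1), M(t))$. The strict version~\eqref{eqn:PmaxStrict} then follows by tracking where equality can occur throughout the case analysis: the threshold $q_{i^*}(t) > 2 f_{i^*}$ rules out the boundary of the unsaturated sub-case, while the disjunction $q_j(t) > f_j$ or $\Delta p(t) > 0$ rules out the tight configuration in which queue $j$ sits at its lower bound with priorities tied, which would otherwise admit a stationary two-cycle rather than a strict decrease.

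The main obstacle I expect is the unsaturated sub-case of the activated queue: because the drop in that regime is $p_{i^*}(t) - p_{i^*,\min}$ instead of the fixed constant $D_{i^*}$, the sign of the two-step change $p_{i^*}(t+2) - p_{i^*}(t)$ is governed by whether $q_{i^*}(t)$ exceeds $2 f_{i^*}$, which is exactly the threshold appearing in the strict statement. Handling this threshold carefully---together with the possibility that a queue switches between saturated and unsaturated states within the two-step window---is where the bulk of the case-by-case bookkeeping is concentrated.
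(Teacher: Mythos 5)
The paper defers its own proof of this lemma to supplemental material, so I can only assess your argument on its merits; it has a genuine gap, and it sits exactly at the sub-case you yourself flag as the ``main obstacle''. Your whole scheme reduces the two-slot claim to the single-anchor bound $\min(M(t+1),M(t+2))\leq M(t)$, but that bound is false in the degenerate configuration where the maximising queue $i^*$ is activated at $t$ in the unsaturated regime with $q_{i^*}(t)<2f_{i^*}$. Concretely, take $c_1=1$, $c_2=2$, $f_1=0.4$, $f_2=0.8$ (so $\eta=0.4<0.5$), $Q=0$, $\gamma=\mathbf{1}$, and the state $q(t)=(1.4,\,0.8)$, which is reachable and in fact recurs every period of the stationary regime. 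Then $p(t)=(1.4,\,1.6)$, queue $2$ is activated unsaturated at its floor, and one computes $q(t+1)=(1.8,\,0.8)$, $q(t+2)=(1.2,\,1.6)$, hence $M(t)=1.6$, $M(t+1)=1.8$, $M(t+2)=3.2$: both entries of your minimum exceed $M(t)$. Your proposed rescue --- recovering $M(t+1)\leq M(t)$ ``under the disjunction $q_j(t)>f_j$ or $\Delta p(t)>0$'' --- fails twice over: first, that disjunction is a hypothesis of the strict implication~\eqref{eqn:PmaxStrict} only, so it is not available when proving the unconditional claim~\eqref{eqn:PmaxSoft}; second, in the state above the disjunction actually holds ($q_1(t)=1.4>f_1$ and $\Delta p(t)=0.2>0$) and yet $M(t+1)>M(t)$. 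So the single-anchor reduction cannot be patched; the rolling minimum over $\{t-1,t\}$ on the right-hand side is load-bearing, and a correct argument must couple the window $\{t+1,t+2\}$ directly to the window $\{t-1,t\}$, using how the state at $t$ was produced from $t-1$ (in the example, $M(t-1)=3.2$), rather than passing through $M(t)$ alone.

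Two further remarks. Your Scenario A asserts a \emph{strict} drop $M(t+1)<M(t)$ when $i^*$ is activated twice in a row; this fails when $i^*$ is unsaturated at its floor $q_{i^*}=f_{i^*}$, where activation leaves $p_{i^*}$ unchanged --- harmless for the weak claim, but it must be tracked for~\eqref{eqn:PmaxStrict}. More seriously, the trace above also strains the literal statement of the lemma: continuing it one finds $\tilde{p}_{max}$ cycling through the values $1.6$ and $1.8$ with period $5$, so that $\tilde{p}_{max}(t+2)=1.8>1.6=\tilde{p}_{max}(t)$ occurs in the stationary regime under every tie-breaking rule. The intended claim presumably tolerates an oscillation of order $\max_i\gamma_i c_i f_i$ or uses a longer rolling window; in any case, your reduction does not establish the statement as written, and the place where it breaks is precisely the threshold case $q_{i^*}(t)<2f_{i^*}$ that you correctly identified but did not resolve.
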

\begin{proof}
The technical proof can be found in the supplemental material.
\end{proof}

We can now use the specificity of the R1 regime to describe the dynamics of the $2 \times 1$ network:

\begin{corollary}
	If $p_{max}(t) = p_{s}$, i.e. $i^*(t) = s$, the saturated queue is activated for the time slot $t$ and:
	\begin{equation*}
		q_u(t) > q_{u,min}\implies\tilde{p}_{max}(t+2) < p_{max}(t)
		\label{eqn:mActivePmaxDec}
	\end{equation*}
	and
	\begin{equation}
		\left\{ \begin{array}{l} q_u(t) = q_{u,min} \\ \Delta p(t) = 0 \end{array} \right.
		\implies 
		\left\{  \begin{array}{l} i^*(t) = s \\ i^*(t+1) = i^*(t+2) = u \end{array} \right.
		\label{eqn:mActiveAtMostOnce}
	\end{equation}
	 \label{corr:PmaxDecreasesStrictly}
	 In other words $q_s$ is activated at most once consecutively and it is followed by two activations of $q_u$.
\end{corollary}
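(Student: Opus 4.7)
The plan is to treat the three conclusions of the corollary in turn and then assemble them into the narrative summary. The first claim, that the saturated queue is activated at time $t$, is immediate from the definition of the (generalized) backpressure policy based on~\eqref{eq:PwithGamma}: by hypothesis the argmax of the priorities at time $t$ is $s$.

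For the implication in~\eqref{eqn:mActivePmaxDec}, I would specialize the strict version of Lemma~\ref{lemma:PmaxDecreases} (equation~\eqref{eqn:PmaxStrict}) to $i^{*}=s$ and $j=u$, and verify its two hypotheses in the $R1$ setting. The bound $q_{i^{*}}(t)>2f_{i^{*}}$ holds because saturation of $s$ gives $q_s(t)\geq c_s$ while the standing assumption $\eta<1/2$ yields $2f_s=2\eta c_s<c_s$. The disjunctive hypothesis is met directly, since $q_u(t)>q_{u,\min}=f_u$ is assumed. Applying the lemma gives $\tilde{p}_{max}(t+2)<\tilde{p}_{max}(t)\leq p_{max}(t)$.

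For the implication in~\eqref{eqn:mActiveAtMostOnce}, I would carry out the two-step dynamics by hand. Activating the saturated queue $s$ at time $t$ yields $q_s(t+1)=q_s(t)-c_s(1-\eta)$ (outflow $c_s$, inflow $\eta c_s$), while $u$ is idle, so $q_u(t+1)=f_u+f_u=2f_u$. Translating to priorities via~\eqref{eq:PwithGamma}, $p_s$ drops by $\gamma_s c_s^{2}(1-\eta)$ and $p_u$ rises by $\gamma_u\eta c_u^{2}$; starting from the tie $\Delta p(t)=0$ this forces $p_u(t+1)>p_s(t+1)$, hence $i^{*}(t+1)=u$. Since $q_u(t+1)=2f_u<c_u$ (unsaturated, again by $\eta<1/2$), activating $u$ at $t+1$ discharges it fully, giving $q_u(t+2)=f_u$, while $q_s$ only accrues its inflow, $q_s(t+2)=q_s(t+1)+f_s$. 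A short computation then collapses to the identity $p_u(t+2)-p_s(t+2)=\gamma_s c_s^{2}(1-2\eta)$, which is strictly positive, so $i^{*}(t+2)=u$ as claimed.

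The main obstacle is the bookkeeping in the second implication: one must apply the correct outflow rule in each saturation regime (outflow $c$ when saturated, outflow $q$ when unsaturated) and carry the priority gap consistently through two successive slots, with $\eta<1/2$ entering crucially both to keep $u$ unsaturated at $t+1$ and to sign the final gap at $t+2$. The concluding narrative statement---that $q_s$ is activated at most once consecutively and is followed by two activations of $q_u$---then follows by combining the generic strict-decrease case~\eqref{eqn:mActivePmaxDec}, which prevents $s$ from remaining the argmax indefinitely, with the explicit alternation pattern proved at the boundary~\eqref{eqn:mActiveAtMostOnce}.
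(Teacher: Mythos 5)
Your proposal is correct and follows essentially the same route as the paper: a disjunction over the values of $q_u(t)$, with the case $q_u(t)>q_{u,min}$ dispatched by the strict-decrease clause~\eqref{eqn:PmaxStrict} of Lemma~\ref{lemma:PmaxDecreases} (the hypothesis $q_s(t)>2f_s$ following from saturation, $q_s(t)\geq c_s>2\eta c_s$), and the boundary case $q_u(t)=q_{u,min}$, $\Delta p(t)=0$ handled by explicit two-step dynamics whose final gap $p_u(t+2)-p_s(t+2)=\gamma_s c_s^2(1-2\eta)>0$ checks out. The bookkeeping of outflows in each saturation regime and the use of $\eta<1/2$ are all handled correctly.
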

\begin{proof}
	The proof is obtained via a disjunction over the values of $q_u(t)$, details included in the supplemental material.
% 	Let $i^* = s$ i.e. $q_s$ is activated and make a disjunction over the values of $q_u(t)$. 

% 	We first prove~\eqref{eqn:mActivePmaxDec}: $ \mathbf{q_u(t) > q_{u,min}}$ The conditions for the regime $R1$ to exist imply that if $q_{i^*}$ is activated, the following inequality must be satisfied:
% 	$$
% 		p_{max}(t) = p_{s}(t) \geq p_{act}.
% 	$$
% $p_{act}$ corresponds to the priority associated with queue size $q_{s, act} = c_s$ (cf. equation~\eqref{eqn:CondMonoParams}). With $\eta > 0.5$, this means that :
% 	$$
% 		q_s(t) \leq q_{s,act} = c_s > 2f_s,
% 	$$ 
% 	so Lemma~\eqref{lemma:PmaxDecreases} applies under the supplementary condition that $q_u(t) > q_{u,min}$, then $\tilde{p}_{max}$ decreases strictly.
	
% 	We now prove~\eqref{eqn:mActiveAtMostOnce}. Let us denote $(a)q_u(t) = q_{u,min}$ and $(b)\Delta p(t) = 0$. (a) yields $p_u(t) = p_{act}$, following the same reasoning as in Section~\ref{ssn:Monoundersaturation}. In addition with (b), it comes that:
% 	$$
% 		p_{max}(t) = p_s(t) = p_u(t) = p_{act},
% 	$$ 
% 	and then:
% 	$$
% 		p_s(t+1) = p_s(t) + \gamma_s (f_s - c_s) c_s < p_s(t) = p_{act}.
% 	$$
% 	Thus, by definition of $p_{act}$, $q_s$ cannot activated at $t+1$, hence it follows that:
% 	$$
% 		p_s(t+2) = p_s(t)  + \gamma_s \underbrace{(2f_s - c_s)}_{< 0 \ as\ \eta<0.5} c_s < p_s(t) = p_{act},
% 	$$
	
% 	\noindent and $q_s$ can neither be activated at $t+2$. It follows that an activation of $q_u$ is systematically followed by two activations of $q_u$.
\end{proof}
\noindent Corollary~\ref{corr:PmaxDecreasesStrictly} states first that the end of the transient state is related to the time when the unsaturated queue takes its minimal value. Equation~\eqref{eqn:mActiveAtMostOnce} describes the queuing process in steady state: the saturated queue cannot be activated more than once consecutively. This implies that the unsaturated queue remains very close to its minimum value.
\end{subsection}
%____________________________________________
\begin{subsection}{Characterization of the steady state}
We now generalize the result from Corollary~\ref{corr:PmaxDecreasesStrictly}.

\begin{proposition}The steady state is reached after a finite time $t_0$ and is characterized by:
	\begin{equation*}
		\forall t \geq t_0,\tilde{p}_{max}(t) = p_{act}
	\end{equation*}
	where $p_{act}$ is defined by~\eqref{eqn:PactWithO}.
	\label{prop:PmaxIsPact}
\end{proposition}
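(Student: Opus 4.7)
I would prove the proposition in three stages: first establish the lower bound $\tilde p_{max}(t)\ge p_{act}$, then invoke Lemma~\ref{lemma:PmaxDecreases} for monotone convergence, and finally upgrade that convergence to finite-time attainment using Corollary~\ref{corr:PmaxDecreasesStrictly}. For the lower bound I would observe that $q_i(t)\ge f_i=q_{i,min}$ at every slot, so the priority definition~\eqref{eq:PwithGamma} yields $p_i(t)\ge p_{i,min}$, hence $p_{max}(t)\ge\max_j p_{j,min}=p_{act}$; taking the rolling minimum over a two-slot window preserves the inequality and gives $\tilde p_{max}(t)\ge p_{act}$ for every $t$.

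By Lemma~\ref{lemma:PmaxDecreases} the sequence $\tilde p_{max}$ is non-increasing on the even and odd index subsequences, so together with the lower bound it converges to a limit $\ell\ge p_{act}$. I would then show that $\ell=p_{act}$ is reached in finite time by contradiction. Suppose $\tilde p_{max}(t)>p_{act}$ for arbitrarily large $t$; each such window $\{t-1,t\}$ satisfies $p_{max}(v)>p_{act}$ at both slots. In R1 we have $q_s>c_s\ge 2f_s$ automatically under $\eta<1/2$, so the first condition of the strict-decrease clause of Lemma~\ref{lemma:PmaxDecreases} always holds when $i^*=s$; moreover if $i^*(t-1)=u$ then the $u$-activation resets $q_u(t)=f_u=q_{u,min}$, and combined with $p_{max}(t)>p_{act}$ this forces $p_s(t)>p_u(t)=p_{act}$, hence $i^*(t)=s$ with $\Delta p(t)>0$. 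Either way some slot in the window triggers the strict-decrease clause and $\tilde p_{max}(t+2)<\tilde p_{max}(t)$. Because each $s$-activation decreases $p_s$ by the fixed quantity $\gamma_s c_s(c_s-f_s)>0$, the strict decrements are bounded below by a positive constant depending only on network parameters, and the lower bound $\tilde p_{max}\ge p_{act}$ therefore permits only finitely many of them. This contradicts the hypothesis and produces a finite $t_0$ with $\tilde p_{max}(t_0)=p_{act}$; non-increasingness together with the lower bound then forces $\tilde p_{max}(t)=p_{act}$ for all $t\ge t_0$.

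The main obstacle will be converting the qualitative strict inequality of Lemma~\ref{lemma:PmaxDecreases} into the uniform quantitative decrement required for the finite-time conclusion. The key observation is that the dynamics are piecewise linear with deterministic, capacity-driven jumps: $u$-activations reset $q_u$ exactly to $f_u$, while each $s$-activation changes $p_s$ by the fixed quantity $\gamma_s c_s(c_s-f_s)$. Consequently every triggering of the strict-decrease clause costs $\tilde p_{max}$ at least a strictly positive constant, so no infinite strictly decreasing sequence above $p_{act}$ is possible. After $t_0$, the Corollary's activation pattern---at most one consecutive $s$-activation followed by two $u$-activations---combined with the monotonicity prevents $\tilde p_{max}$ from ever rising above $p_{act}$ again, concluding the argument.
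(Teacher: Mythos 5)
Your proof follows essentially the same route as the paper's: both rest on the soft and strict clauses of Lemma~\ref{lemma:PmaxDecreases} together with Corollary~\ref{corr:PmaxDecreasesStrictly}, identify $p_{act}$ as the lower bound of $\tilde{p}_{max}$, and show that the strict-decrease conditions keep firing until that bound is attained. The only substantive differences are that you replace the paper's queue-boundedness/regular-activation argument with a direct two-slot case analysis on $i^*$, and that you are more explicit about why the decrements are uniformly bounded below so that $p_{act}$ is actually reached in finite time --- a point the paper's own proof leaves at ``arbitrarily close to $p_{act}$'' --- so if anything your version tightens the argument rather than weakening it.
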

\begin{proof}
	First we prove that if queue $s$ is regularly activated then $\tilde{p}_{max}(t)$ strictly decreases overtime. It eventually reaches the minimal value $p_{act}$. 
	
	Lemma~\ref{lemma:PmaxDecreases} states that $\tilde{p}_{max}(t)$ is non-increasing therefore bounded above by $P$. The input flow within one time slot is bounded (actually equal to $f_i$) so between two time slots, $p_{max}(t)$ remains bounded above with $p_{max}(t) \leq P + \max\limits_i f_i$. In the \emph{$2\times1$} network, $p_i$ is affine of $q_i$, so queues are also bounded above. Hence there exists an interval such that every queue is activated at least once within this interval otherwise the constants input flows would make it diverge.
	
	Let us consider such an interval and a time $t_s$ in that interval at which $q_s$ is activated. Equation~\eqref{eqn:PmaxSoft} states that $\tilde{p}_{max}$ is non increasing. Corollary~\ref{corr:PmaxDecreasesStrictly} states that while $q_u(t) > q_{u,min}$, $\tilde{p}_{max}$ is decreasing strictly. Let us consider the other case, for which $q_u(t_s) = q_{u,min}$, corresponding to $p_u(t_s) = p_{act}$ according to~\eqref{eqn:PactWithO}. We have that $p_s(t_s) = p_{max}(t_s)$:
	
	\begin{itemize}
	\item if $p_s(t_s) > p_{act}$, equivalently $\Delta p(t_s) > 0$ and~\eqref{eqn:PmaxStrict} yields $\tilde{p}_{max}(t)$ strictly decreasing,
	\item else, $\Delta p(t_s) = 0$, $p_{max}(t) = p_{act}$, so $\forall t > t_s, \tilde{p}_{max}(t) = p_{act}$ which is the lower bound.
	\end{itemize}
	
	\noindent In any case, either $\tilde{p}_{max}(t)$ decreases or has reached its lower bound after the activation of $q_s$, which means that in finite time, $\tilde{p}_{max}(t)$ is arbitrarily close to $p_{act}$.
\end{proof}
\noindent It follows that $\tilde{p}_{max}(t)$ being constant bounds the size of the queues below and above. Experimentally, these bounds are quite tight, see Figure~\ref{fig:TransientToSteady}.

\begin{theorem}
	In the $R1$ regime, given $\eta < 0.5$, the \emph{$2\times1$} network converges to a steady state where under a generalized backpressure algorithm with $\gamma > 0$:
	\begin{itemize}
		\item for the unsaturated queue $q_u$:
		\begin{equation*}
			f_u \leq q_u(t) \leq 2f_u
		\end{equation*}
		\item and for the saturated queue:
		\begin{equation*}
			q_{s,act} + (f_s - c_s) \leq q_s(t) \leq q_{s,act} + f_s	
		\end{equation*}
		with
		\begin{equation*}
			q_{s,act} =  \frac{\gamma_0}{\gamma_s}(1 - \frac{c_u}{c_s}) Q + \frac{\gamma_u c_u}{\gamma_s c_s} f_u
		\end{equation*}
	\end{itemize}
	\label{theo:Qbounds}
\end{theorem}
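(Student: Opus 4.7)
The plan is to anchor the argument on Proposition~\ref{prop:PmaxIsPact}, which gives that after a finite time the rolling maximum priority stabilizes at $p_{act}$, together with Corollary~\ref{corr:PmaxDecreasesStrictly}, which pins down the steady-state activation pattern: the saturated queue fires at most once consecutively and each of its activations is immediately followed by two activations of the unsaturated queue. This reduces the theorem to a deterministic conservation-law computation on a fully determined schedule, on which I would derive the announced bounds separately for each queue.

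For the unsaturated queue $q_u$, I would apply the conservation equation~\eqref{eq:Conserv} with the defining property of the unsaturated regime $s_u(t) = q_u(t)$. An activation thus resets $q_u(t+1) = f_u$, while a non-activation step adds the inflow, giving $q_u(t+1) = q_u(t) + f_u$. Since the Corollary forbids more than one consecutive non-activation step for $q_u$ in steady state, the queue oscillates in $\{f_u, 2f_u\}$, yielding $f_u \leq q_u(t) \leq 2 f_u$.

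For the saturated queue $q_s$, the closed form for $q_{s,act}$ follows by substituting $p_{act} = (\gamma_u f_u - \gamma_0 Q) c_u$ from~\eqref{eqn:PactWithO} into~\eqref{eqn:MinQforall} and simplifying. In the saturated regime $s_s = c_s$, so the conservation update is $q_s(t+1) = q_s(t) + f_s - c_s$ when $q_s$ is activated and $q_s(t+1) = q_s(t) + f_s$ otherwise. Let $t^\star$ be an activation time of $q_s$ in steady state. Since the Corollary forbids activation of $q_s$ at $t^\star - 1$, we must have $q_s(t^\star - 1) < q_{s,act}$ (else $q_s$ would already have been activated earlier), hence $q_s(t^\star) = q_s(t^\star - 1) + f_s < q_{s,act} + f_s$, which is the upper bound. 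Immediately after activation, $q_s(t^\star + 1) = q_s(t^\star) + f_s - c_s \geq q_{s,act} + f_s - c_s$, which is the lower bound. Between two consecutive activations of $q_s$, the queue grows monotonically by $f_s$ per step and remains below $q_{s,act}$ until the next activation, so these two extreme values bracket $q_s(t)$ throughout the steady-state cycle. The feasibility assumption $\eta < 0.5$, i.e.\ $c_s > 2 f_s$, is used implicitly to ensure that the post-activation value sits strictly below $q_{s,act}$, which is what makes the Corollary's ``at most one consecutive $q_s$ activation'' consistent with the conservation dynamics.

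The main obstacle I anticipate is the tight discrete-time bookkeeping around the activation instants: although the qualitative schedule is fully dictated by the preceding results, the $+f_s$ slack in the upper bound and the $f_s - c_s$ slack in the lower bound both arise from a single growth step bracketing the activation and must be accounted for without off-by-one errors. A secondary subtlety is that the theorem should really be read as an eventual (post-$t_0$) bound, with $t_0$ inherited from Proposition~\ref{prop:PmaxIsPact}; I would state this explicitly at the start of the proof to avoid ambiguity, and note that ties between $p_s$ and $p_u$ at $p_{act}$ are absorbed into the next cycle without affecting the bounds.
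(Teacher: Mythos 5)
Your proposal reaches the same bounds as the paper but by a different mechanism. The paper's proof never touches the conservation equation directly: it works entirely at the level of priorities, using the fact that in steady state $\tilde{p}_{max}(t)=p_{act}$ (Proposition~\ref{prop:PmaxIsPact}) together with the one-step increment bound $p_i(t)\le \tilde{p}_{max}(t)+\gamma_i f_i c_i$ (a queue can grow by at most $f_i$ per step, and $p_i$ is affine in $q_i$ with slope $\gamma_i c_i$), then inverts the affine priority--queue map to read off $q_u\le 2f_u$ and $q_s\le q_{s,act}+f_s$; the lower bound for $q_s$ comes from one activation step ($-\gamma_s c_s^2$ in priority, i.e.\ $-c_s$ in queue) taken from a state at $p_{act}$. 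You instead reconstruct the explicit steady-state schedule from Corollary~\ref{corr:PmaxDecreasesStrictly} ($q_s$ fires at most once, then $q_u$ twice) and trace the conservation law~\eqref{eq:Conserv} through one cycle. Both routes are legitimate; the paper's is shorter and does not require knowing the schedule, while yours makes the periodic orbit explicit and shows where each slack term ($+f_s$, $-c_s$) physically comes from.

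One step of yours needs tightening. You assert that non-activation of $q_s$ at $t^\star-1$ forces $q_s(t^\star-1)<q_{s,act}$ ``else $q_s$ would already have been activated earlier.'' Non-activation only gives $p_s(t^\star-1)\le p_u(t^\star-1)$, and $p_u(t^\star-1)$ can be as large as $p_{act}+\gamma_u f_u c_u$ (when $q_u=2f_u$), which would allow $q_s(t^\star-1)$ to exceed $q_{s,act}$ by $\tfrac{\gamma_u c_u}{\gamma_s c_s}f_u$ and break your upper bound. The correct justification is the steady-state identity itself: since $\tilde{p}_{max}(t^\star)=p_{act}$ and $p_s(t^\star)=\max_i p_i(t^\star)\ge p_{act}$, either $p_s(t^\star)=p_{act}$ exactly (and the bound is immediate) or $\max_i p_i(t^\star-1)=p_{act}$, which forces $p_s(t^\star-1)\le p_{act}$, i.e.\ $q_s(t^\star-1)\le q_{s,act}$. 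With that repair (and with the strict inequality relaxed to $\le$ to handle ties), your argument goes through; the closed form for $q_{s,act}$ via \eqref{eqn:PactWithO} and \eqref{eqn:MinQforall}, and your handling of the unsaturated queue, match the paper's.
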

\begin{proof}
	The proof consists of expanding the results from Proposition~\ref{prop:PmaxIsPact}.
	
	\textbf{For the unsaturated queue} we obtain:
	\begin{equation}
		p_u(t) \leq \tilde{p}_{max}(t) + \gamma_u f_u c_u = p_{act} + \gamma_u f_u c_u
		\label{eqn:AYGH}
	\end{equation}
	 Equation~\eqref{eqn:PactWithO} states that $p_{act}= p_u(q_u=f_u)$ Consequently the queues associated with the priorities from equation~\eqref{eqn:AYGH} read:
	 \begin{equation*}
	 	q_{u}(t) \leq 2f_u
	\end{equation*}
	and the lower bound corresponds to \eqref{eqn:MinQ}.
		
	\textbf{For the saturated queue}:
	\begin{equation*}\begin{array}{rl}
		p_s(t) &\leq \tilde{p}_{max}(t) + \gamma_s f_s c_s \\
		&\ = p_{act} + \gamma_s f_s c_s \\
		&\ = \underbrace{(\gamma_u f_u - \gamma_0 Q) c_u}_{p_{act}\ (cf. \eqref{eqn:PactWithO})} +  \gamma_s f_s c_s	\\
		&\ = p_{s,max}
		%\frac{1}{\gamma_s}
	\end{array}\end{equation*}
	
	\noindent We define as well the upper bound $q_{s,max}$ of $q_s$ s.t. $p_{s,max} = p_s(q_s = q_{s,max}) = (\gamma_s q_{s,max} -  \gamma_0 Q)$. Consequently:
	\begin{equation*}\begin{array}{rl}
		q_{s,max} = f_s + \frac{\gamma_0}{\gamma_s}(1 - \frac{c_u}{c_s}) Q + \frac{\gamma_u c_u}{\gamma_s c_s} f_u
	\end{array}\end{equation*}
	\noindent For the minimum $q_{s,min}$, we note that it is reached after an activation done with $p_s(t) = p_{act}$ (defined as the minimal priority to be activated): 		
	\begin{equation*}\begin{array}{rl}
		p_{s,min} &= p_{act} + \gamma_s (f_s - c_s) c_s \\
			&= p_{s,max} - \gamma_s c_s c_s
	\end{array}\end{equation*}
	\noindent This corresponds to associated queue values $q_{s,min} = q_{s,max} - c_s$ and the result follows.
\end{proof}

We illustrate the results of this section using a numerical simulation reported in Figure~\ref{fig:TransientToSteady}.
\begin{figure}[htb!] 
	\centering
	\includegraphics[width=\columnwidth]{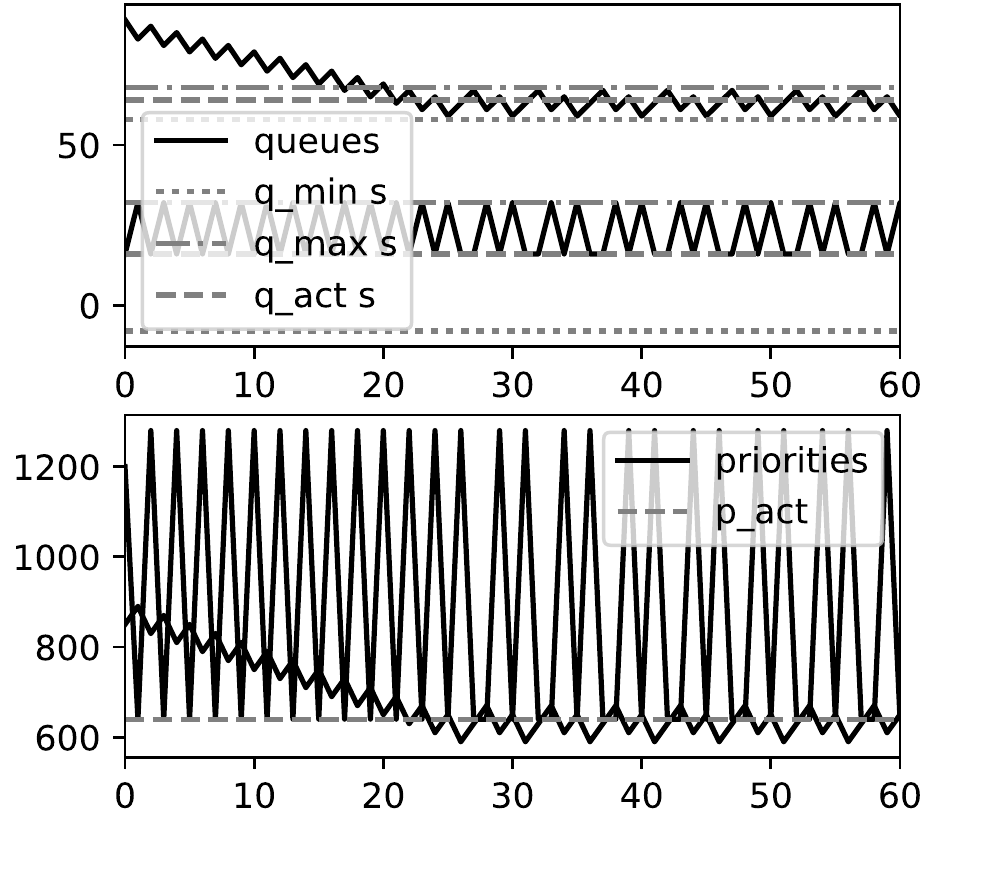}
	\caption{\textbf{Convergence to the steady state} for queues (top) and priorities (bottom). As predicted by the theory, the priority decreases (here until time step $25$) and then oscillates around a constant activation priority.}
	\label{fig:TransientToSteady} 
\end{figure}

After a transient state corresponding to priority values higher than $p_{act}$, the system stabilizes to a periodic state. Priorities oscillate around $p_{act}$ and $\tilde{p}_{max}(t)$ is constant, as expected from the theory.
\end{subsection}
%______________________________________________
\begin{subsection}{Total time spent in the network}
We now derive the total time spent in the network, assuming that the average queue size is the average of the bounds from Theorem~\ref{theo:Qbounds}.
\begin{definition}
	The average queue size of $\bar{q}_s$ is approximated by:
	\begin{equation*}
		\bar{q}_s = q_{act} + f_s - \frac{1}{2}c_s
		\label{eqn:Qbar}
	\end{equation*}
	\label{def:Comparison}
\end{definition}
\noindent The queue size translates directly into time spent given our implicit choice of a unit time step. We can now compare the classical backpressure algorithm and the proposed backpressure algorithm based on total time spent in the network.
%Theorem 5
\begin{theorem}
	In regime $R1$, the ratio of total time spent in the network as the heterogeneity increases is:
	\begin{itemize}
		\item with $(u,s) = (1,2)$: $\frac{\bar{q}_{s,classical}}{\bar{q}_{s,proposed}} \sim_{k \rightarrow \infty} 1$
		
		\item with $(u,s) = (2,1)$: $\frac{\bar{q}_{s,classical}}{\bar{q}_{s,proposed}} \sim_{k \rightarrow \infty} k$
	\end{itemize}
	\label{theo:Better}
\end{theorem}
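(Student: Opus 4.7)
The plan is to apply Theorem~\ref{theo:Qbounds} in each of the four combinations (two phase assignments $(u,s)\in\{(1,2),(2,1)\}$ crossed with the two weight choices $\gamma=\mathbf{1}$ or $\gamma_i=1/c_i$), compute the resulting $\bar{q}_s$ via Definition~\ref{def:Comparison}, and take ratios as $k\to\infty$. First I instantiate the network parameters of Figure~\ref{fig:ParametricJunction}: $c_1=c$, $c_2=kc$, downstream $c_0=(k+1)c$, $f_1=\eta c$, $f_2=k\eta c$. For the classical weights the formula for $q_{s,act}$ collapses to $(1-c_u/c_s)Q+(c_u/c_s)f_u$; for the proposed weights one computes $\gamma_0/\gamma_s=c_s/((k+1)c)$ and $(\gamma_u c_u)/(\gamma_s c_s)=1$, giving $q_{s,act}=(c_s-c_u)Q/((k+1)c)+f_u$. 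Adding $f_s-c_s/2$ from Definition~\ref{def:Comparison} then yields explicit closed-form expressions for the four averages to be compared.

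In case $(u,s)=(1,2)$, Proposition~\ref{prop:BothMono} forces $Q$ to be of order at least $kc$ for both algorithms, so I set $Q=\alpha kc$ with $\alpha$ large enough to satisfy both phase conditions simultaneously. A direct substitution yields $\bar{q}_{s,classical}=kc\,[\alpha+\eta-1/2]+O(1)$ and $\bar{q}_{s,proposed}=kc\,[\alpha+\eta-1/2]+O(1)$, whose leading $\Theta(kc)$ coefficients agree, so the ratio tends to $1$. In case $(u,s)=(2,1)$, Proposition~\ref{prop:BothMono} only requires $Q\le O(k\eta c)$, so I may take $Q$ bounded. Substitution then gives $\bar{q}_{s,classical}=(1-k)Q+k^2\eta c+O(1)\sim k^2\eta c$ while $\bar{q}_{s,proposed}=-\frac{k-1}{k+1}Q+k\eta c+O(1)\sim k\eta c$, and the ratio is therefore equivalent to $k$.

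The main obstacle is purely bookkeeping: one must carefully track which of $Q$, $f_u$, $f_s$ and $c_s/2$ dominate in each of the four expansions, and ensure that the chosen scaling of $Q$ lies simultaneously in both phase regions of Proposition~\ref{prop:BothMono}. Specifically, in case $(1,2)$ one must verify that the gap between the coefficients $(1-1/k)$ (classical) and $(k-1)/(k+1)$ (proposed) multiplying $Q=\Theta(k)$ contributes only a bounded correction, and in case $(2,1)$ one must confirm that the term $(1-k)Q$ is negligible beside $k^2\eta c$ as long as $Q=O(1)$. Both checks reduce to elementary asymptotic expansions of the two rational expressions in $k$.
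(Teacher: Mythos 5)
Your proposal is correct and follows exactly the route the paper takes: it instantiates $q_{s,act}$ from Theorem~\ref{theo:Qbounds} for each weight choice, forms $\bar{q}_s = q_{s,act} + f_s - \tfrac{1}{2}c_s$ per Definition~\ref{def:Comparison}, and compares leading orders in $k$ under the phase constraints of Proposition~\ref{prop:BothMono}. The paper's own proof is a one-line appeal to this same expansion, so your version simply makes the bookkeeping explicit.
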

\begin{proof}
We expand the expression of total time spent from Definition~\ref{def:Comparison} using the detailed values from Theorem~\ref{theo:Qbounds}.
\end{proof}
\end{subsection}
\end{section}
\begin{section}{Numerical Results}\label{sec:NumRes}
We first introduce the experimental setup, then go over results of benchmark experiments, and finally analyze performance on scenarios mimicking realistic conditions.
\begin{subsection}{Experimental setup}
We consider a heterogeneous Manhattan grid wherein some links have high capacity (major arterial roads) and other links have low capacity (secondary arterial roads). At each junction, $3$ distinct movements are authorized (left, straight, right) with the straight movement having double capacity.

The average demand for each origin node, destination node pair is drawn from an exponential distribution. For every origin-destination pair, the path minimizing the travel-time at the speed limit is computed, and flow is assigned accordingly. 

Routing rates at each junction for the aggregate flow are computed from the full assignment, by computing the proportion of flow using each movement compared to the incoming flow. Input flow is drawn from a Poisson law with mean defined for each origin-destination pair as explained above.

Performance metrics such as time spent in the network are computed by summing over time the sizes of queues.
\end{subsection}
\begin{subsection}{Benchmark experiments}
In this section we perform controlled experiments to investigate and validate various properties of the proposed algorithm, compared to the classical backpressure algorithm. We consider a $10\times10$ Manhattan grid with a major arterial every 5 blocks, and a time step of $30$ seconds.

First we analyze the impact of the parameter $\rho$ characterizing the magnitude of the demand. Figure~\ref{fig:ExpTraffic} displays the ratio of time spent in the network for $500$ time steps.
\begin{figure}[htb!] 
	\centering
	\includegraphics[width=\columnwidth]{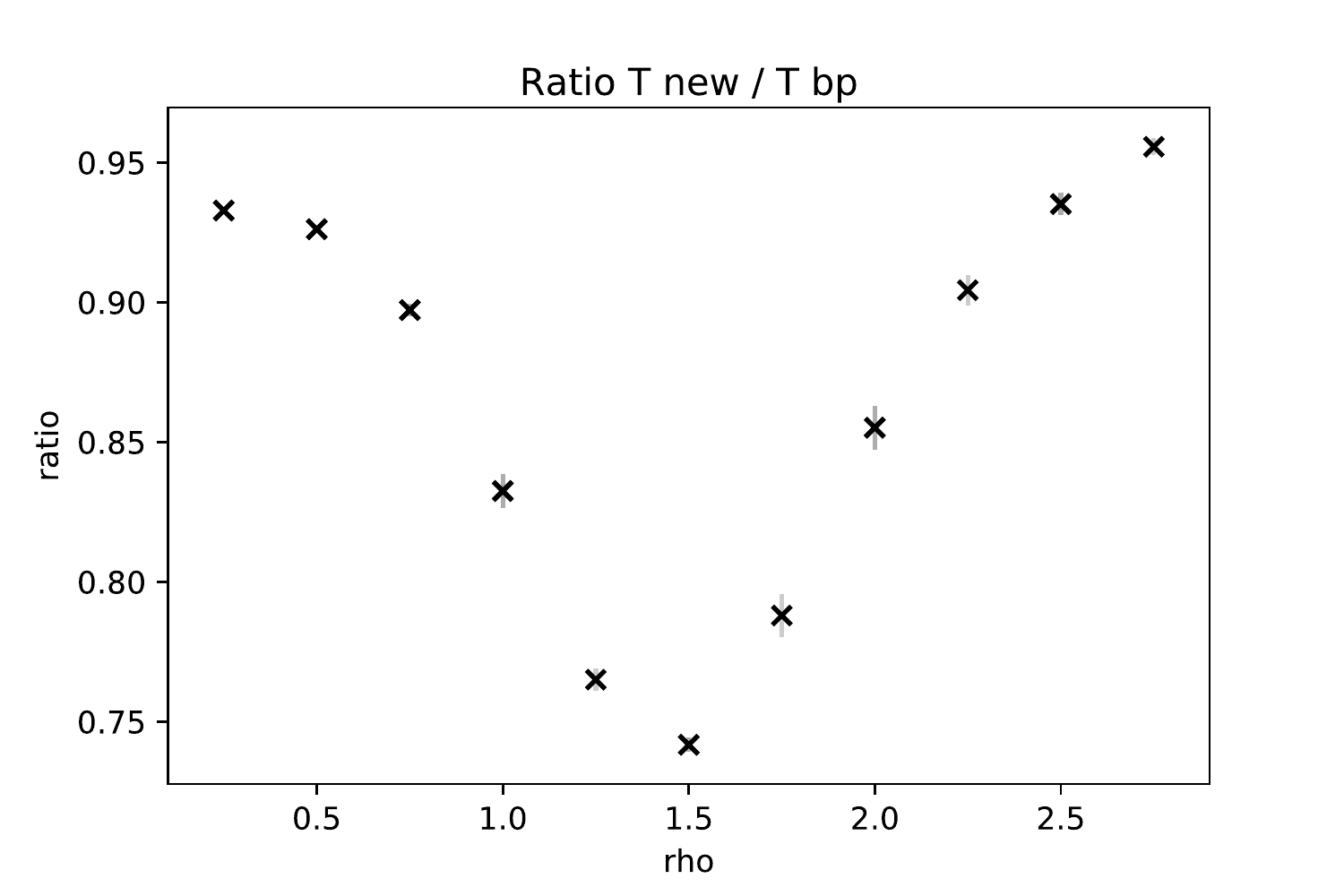}
	\caption{\textbf{Ratio of time spent in the network} for the two algorithms for increasing demand. Every point represents $300$ simulations and displays the deviation of the sample for classical backpressure (``bp'') and proposed algorithm (``new'').}
	\label{fig:ExpTraffic}
\end{figure}

These results confirm the theoretical results on the existence of three regimes depending on the values taken by $k,Q$: for low demand (low $\rho$ values), the flow is not really constrained so both algorithms have similar performance, for demand around and slightly above capacity there is little available supply and the new algorithm improves on the classical backpressure, and for higher demand the network is too saturated to leave room for optimization and both algorithms have similar performance. In this setting, the network is unstable for $\rho > 2$. The improvement is at most $25\%$.

Second we analyze the impact of the parameter $h$ characterizing the network heterogeneity, with $1/h$ being the density of major arterial roads; $h = 0$ corresponds to no major arterial, $h = 1$ corresponds to all roads being major arterial, 2 to one over 2, etc. The results of simulations are presented in Figure~\ref{fig:ExpH_highways}.

\begin{figure}[htb] 
	\centering
	\includegraphics[width=\columnwidth]{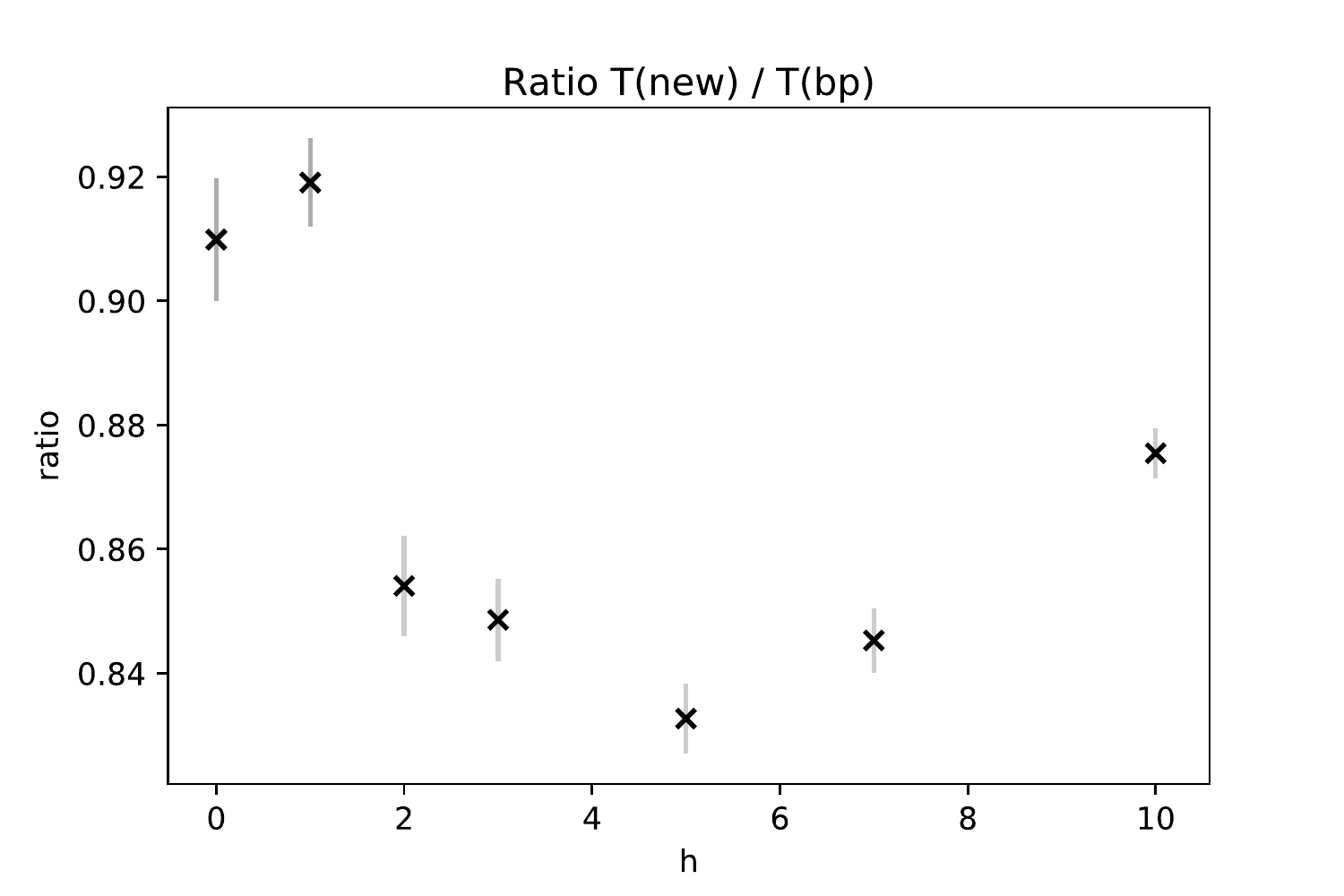}
	\caption{\textbf{Ratio of time spent in the network} for the two algorithms for different distances between parallel highways. Every point represents $300$ simulations, for classical backpressure (``bp'') and proposed algorithm (``new'').}
	\label{fig:ExpH_highways}
\end{figure}

Low values of $h$ correspond to a homogeneous network and the experiments confirm ($h = 0,1,2$) that the proposed algorithm has no significant impact on traffic compared to the classical backpressure. For higher $h$ the network is more heterogeneous and as expected the proposed algorithm has better relative performance.

We now analyze the impact of the ratio of the capacities between major arterial roads and secondary arterial roads. The results of the simulations are displayed in Figure~\ref{fig:ExpFactor}. 
\begin{figure}[htb!] 
	\centering
	\includegraphics[width=\columnwidth]{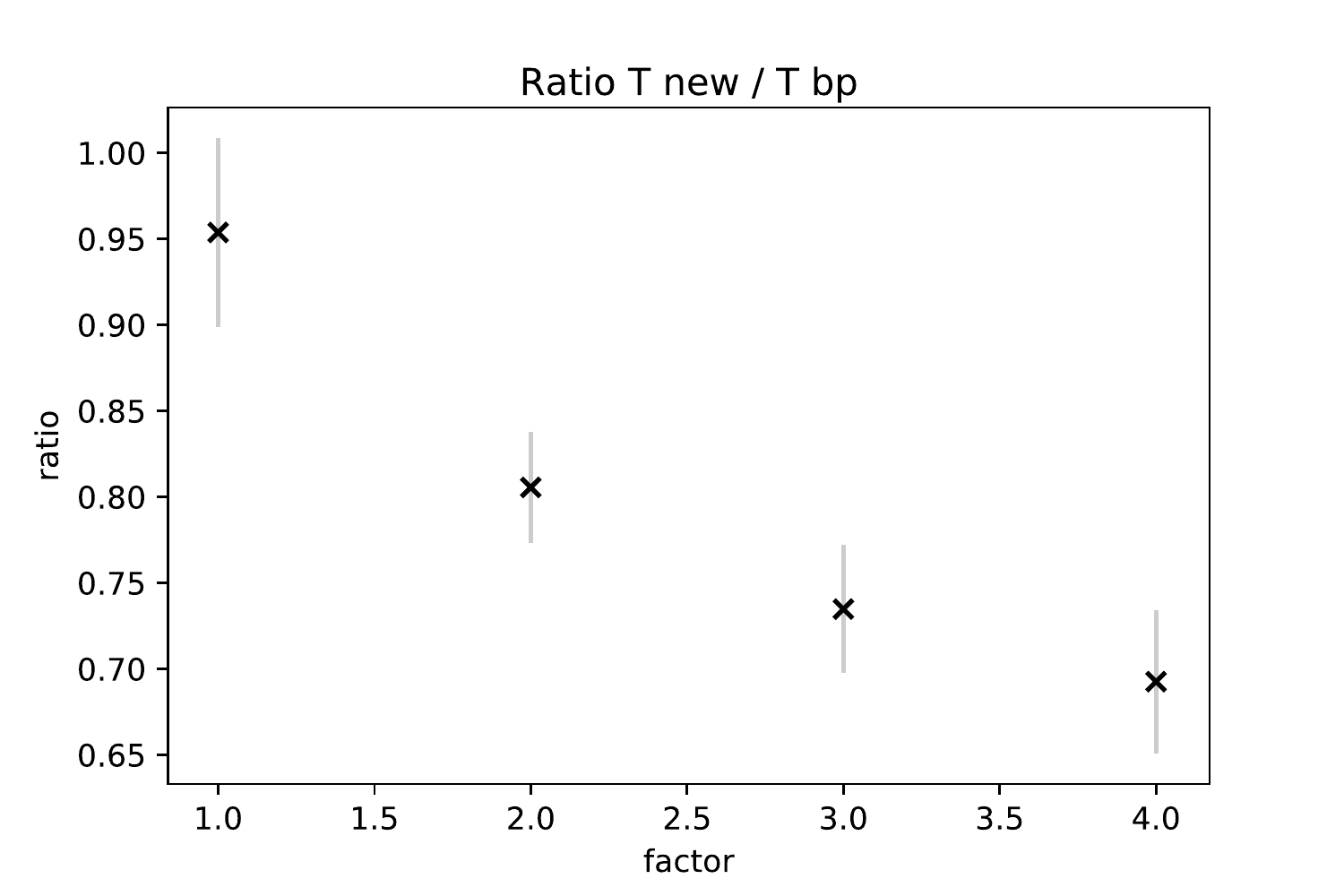}
	\caption{\textbf{Ratio of time spent in the network} for the two algorithms for increasing capacity heterogeneity. Every point represents $300$ simulations, for classical backpressure (``bp'') and proposed algorithm (``new'').}
	\label{fig:ExpFactor}
\end{figure}

For a homogeneous network (capacity ratio close to $1$), the performance of both algorithms is similar. The comparative performance of the new algorithm with respect to the regular backpressure increases as the capacity ratio increases. The higher the capacity ratio the higher the flow on the major arterials (because their attractivity increases) so the more flow heterogeneity there is between major arterial and other roads.
\begin{figure}[htb!] 
	\centering
	\includegraphics[width=\columnwidth]{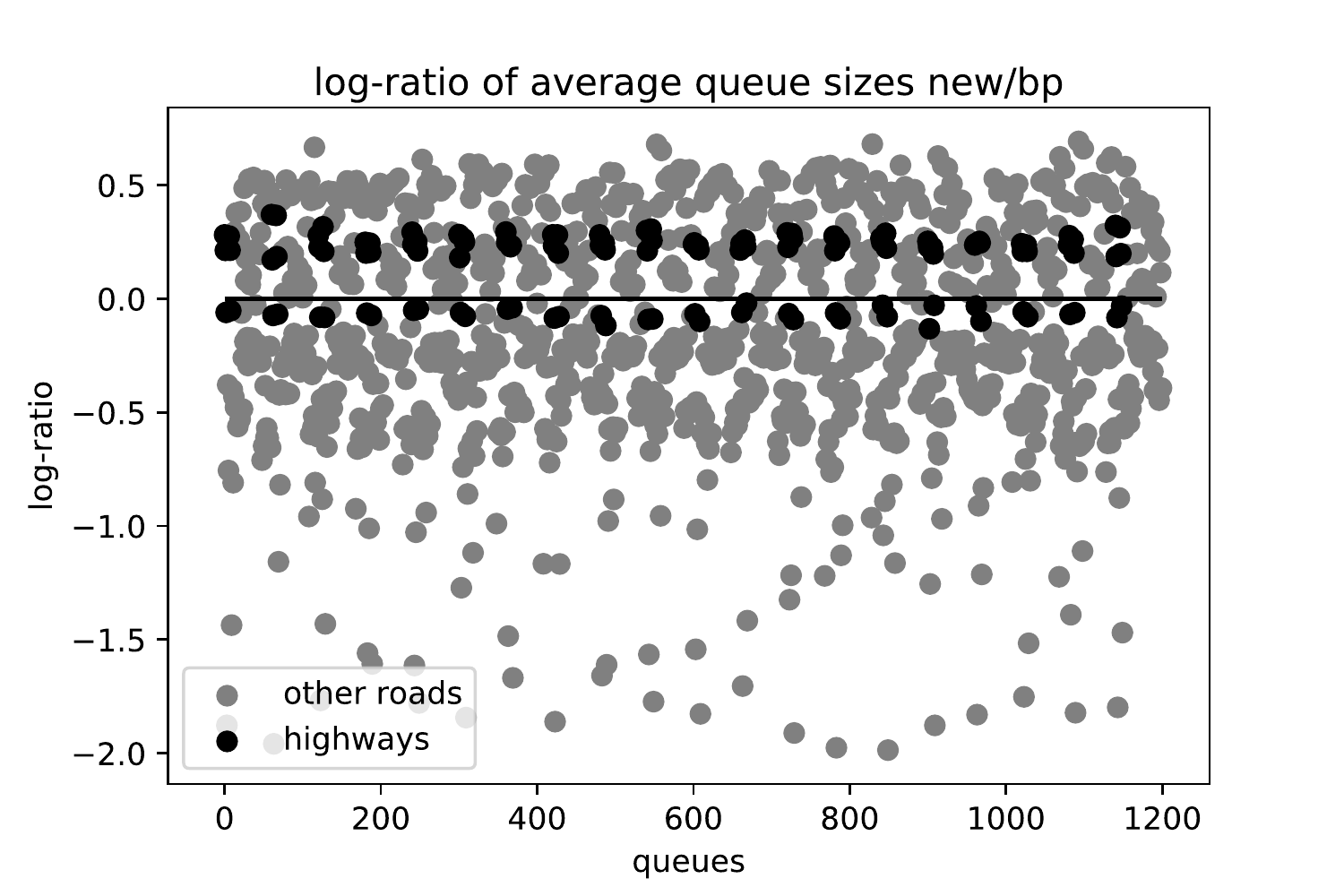}
	\caption{\textbf{Log-ratio of the average queue size} for the proposed algorithm versus the classical algorithm, for every link in the network, including major arterials (highways) and secondary arterials (other roads).}
	\label{fig:ExpScatter}
\end{figure}

These cases correspond to the lower part of Figure~\ref{fig:BothMono} for the single junction case where the proposed algorithm was proven theoretically to have better performance. In order to understand how the proposed algorithm achieves better performances we compare the average size of each queue for a heterogeneous network in Figure~\ref{fig:ExpScatter}. 

The results highlight that the proposed algorithm results in a moderate queue size increase on major arterials but yields significant queue size reduction on secondary arterial roads, meaning that the proposed algorithm is better able to take advantage of the network heterogeneity. In the next section we further explore the algorithm performance in realistic scenarios.
\end{subsection}
%_________________________________________________________________
\begin{subsection}{Peak hour scenario}
In this section, we analyze a scenario modeled after a peak hour. The specific network considered is a Manhattan network with a $50\times10$ grid with a major arterial road every 4 blocks. The factor $\rho$ varies over time in a triangular shape from $0$ to $3$ to model a demand temporally exceeding the network capacity ($\rho > 2$ corresponds to an instable network).
\begin{figure}[htb!] 
	\centering
	\includegraphics[width=\columnwidth]{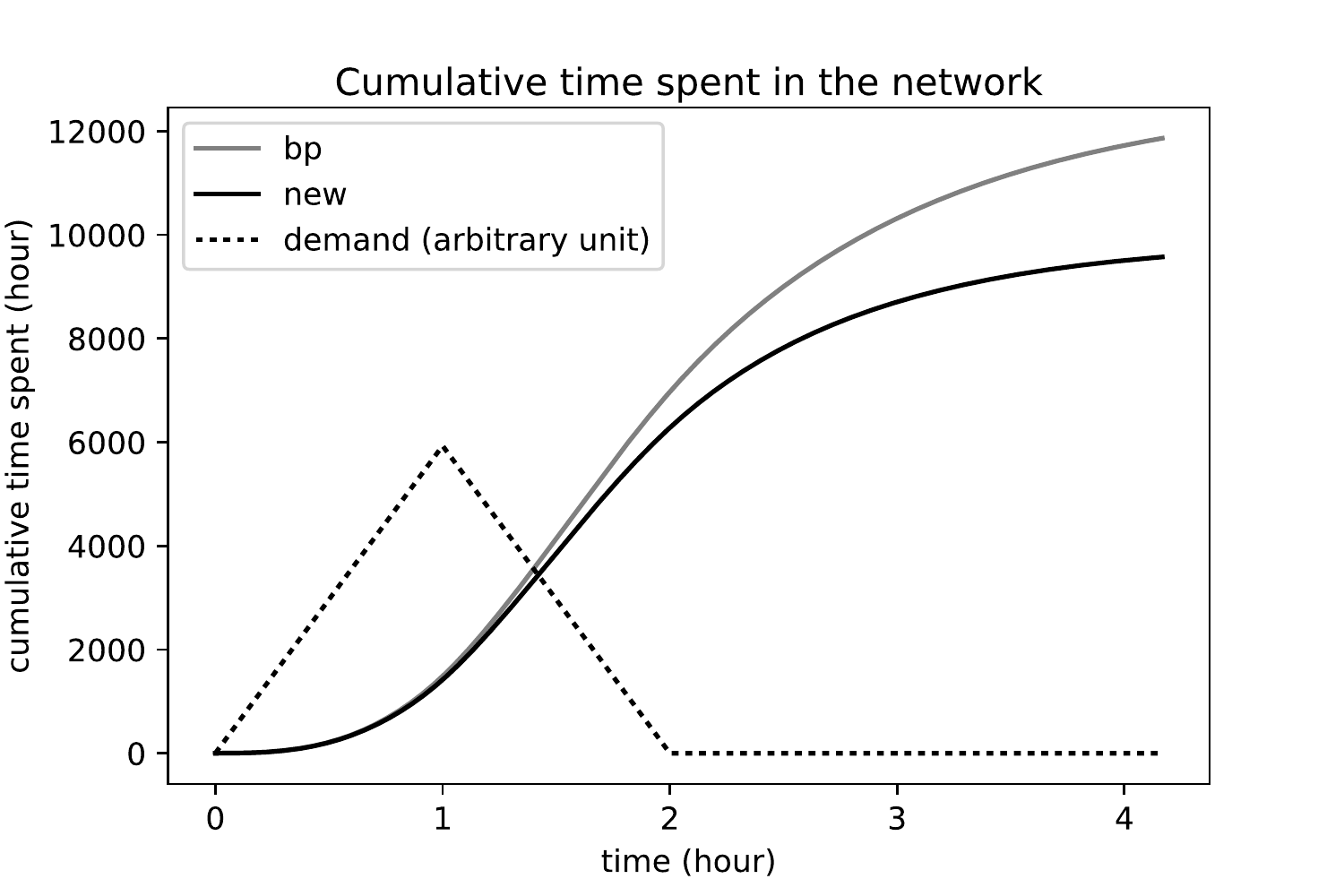}
	\caption{\textbf{Cumulative time spent in the network} for both algorithms (the unscaled value of $\rho$ is also represented).}
	\label{fig:ExpPeak}
\end{figure}

Figure~\ref{fig:ExpPeak} illustrates that the excess demand causes the cumulative time spent in the network to increase in all cases. However the increase is less important in the case of the proposed algorithm, in particular when the demand returns to a low value, and subsequently. This can be explained by the fact that in the first part of the peak period, the network is capacity-constrained, hence no algorithm is given sufficient freedom to optimize. However in the second phase of the peak time, and subsequently, the proposed algorithm is able to take better advantage of available capacity.
\end{subsection}
\begin{subsection}{Incident scenario}
The second scenario that we consider is an incident modeled as a link with zero capacity for a one hour period in a Manhattan network. The network size and simulation parameters are chosen such that the boundary links are never impacted, and the incident happens after the network loading period, and clears before the end of the simulation. 

We first analyze in Figure~\ref{fig:IncidentQueues} the extent to which queues are smoothed in space via the backpressure effect of preserving already large queues from further inflow.
\begin{figure}[htb!] 
	\centering
	\includegraphics[width=\columnwidth]{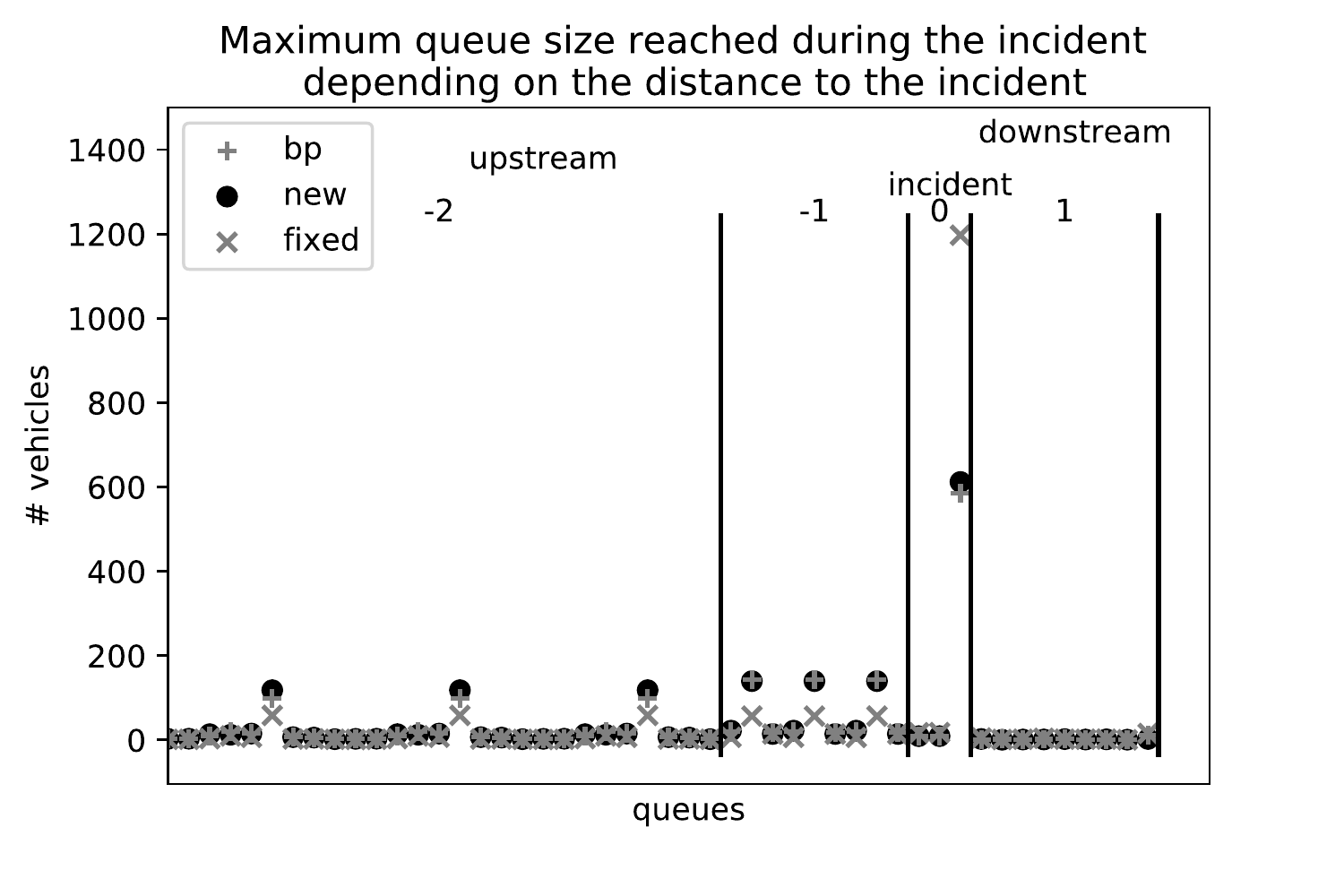}
	\caption{\textbf{Maximum queue size:} as a function of distance to incident (at the intersection directly connected to the incident link, 1 hop and 2 hops upstream, as well as 1 hop downstream) for the classical backpressure (``bp''), the proposed algorithm (``new''), and the fixed cycle policy (``fixed'').}
	\label{fig:IncidentQueues}
\end{figure}

For both the classical backpressure and the proposed algorithm, the queues at the incident location are much lower compared to the fixed cycle policy, which in practice reduces the chances of grid-lock. This is achieved at the cost of having slightly longer queues upstream of the incident.

We now consider the cumulative queue length across the network as a function of time in Figure~\ref{fig:Incident}.
\begin{figure}[htb!] 
	\centering
	\includegraphics[width=\columnwidth]{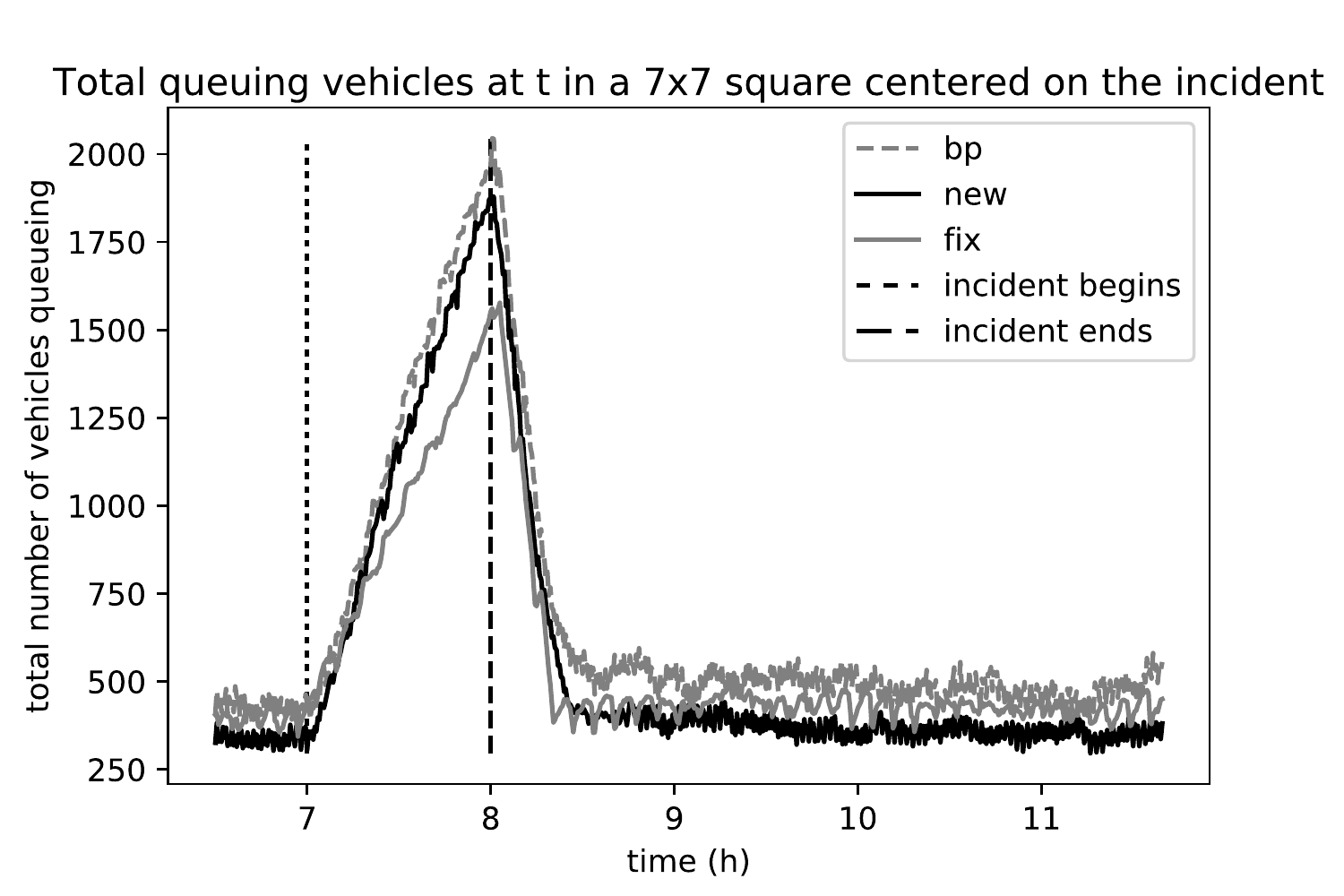}
	\caption{\textbf{Queue size in the vicinity of the incident link}: under the classical backpressure policy (``bp''), the proposed control policy (``new''), and a fixed cycle policy (``fix'') as a benchmark. The incident start and end times are indicated with vertical dashed lines.}
	\label{fig:Incident}
\end{figure}

We observe that both the classical backpressure and the proposed algorithm outperform the fixed cycle policy, and the proposed algorithm slightly improves on the classical backpressure, with the difference between all methods increasing as the network gets more saturated, since the key benefit of the adaptive scheduling algorithms lies in efficiently using available link capacities around saturated conditions.
\end{subsection}
\end{section}
\begin{section}{Conclusion}\label{sec:Concl}
In this work we investigated the problem of the convergence properties of an approximate gradient descent method for network adaptive control. 

Using a fundamental $2\times1$ network, we proved that different regimes exist depending on exogenous parameters such as the magnitude of the demand with respect to the network capacity, and the heterogeneity of the flow across competing links. We characterized each of these regimes theoretically, and verified in simulation on realistic network the expected theoretical properties. 

As part of this analysis, we also showed that appropriate calibration of the weights in the objective function can significantly improve the asymptotic objective value, by up to $\mathcal{O}(k)$ with $k$ the ratio of competing queues capacity.
\end{section}
\bibliography{MAIN-CarpentierJ.272.bib}
\bibliographystyle{plain}
\end{document}